\newtheorem{theorem}{Theorem}[section]
\newtheorem{proposition}[theorem]{Proposition}
\newtheorem{remark}[theorem]{Remark}
\newtheorem{lemma}[theorem]{Lemma}
\newtheorem{definition}[theorem]{Definition}
\numberwithin{equation}{section}
\newtheorem{corollary}[theorem]{Corollary}
\DeclareMathOperator \Vol{Vol}
\DeclareMathOperator \Herm{Herm}
\DeclareMathOperator \Id{Id}
\DeclareMathOperator \YM{YM}
\DeclareMathOperator \rank{rank}
\begin{document}

\title{\textsc{Numerically flat holomorphic bundles over  non K\"ahler manifolds }}
\author{Chao Li,  Yanci Nie and Xi Zhang}
\address{Chao Li\\School of Mathematical Sciences\\
University of Science and Technology of China\\
Hefei, 230026,P.R. China\\}\email{leecryst@mail.ustc.edu.cn}

\address{Yanci Nie\\School of Mathematical Sciences\\
Xiamen University\\
Xiamen, 361005\\ } \email{nieyanci@xmu.edu.cn}
\address{Xi Zhang\\School of Mathematical Sciences\\
University of Science and Technology of China\\
Hefei, 230026,P.R. China\\ } \email{mathzx@ustc.edu.cn}
\subjclass[]{53C07, 58E15}
\keywords{Gauduchon,\ Astheno-K\"ahler,\ Hermitian-Yang-Mills flow,\ numerically flat,\ Semistabe,\ Filtrtion }
\thanks{The authors were supported in part by NSF in China,  No.11625106, 11571332 and 11721101.}
\maketitle

\begin{abstract}
In this paper, we study numerically flat holomorphic vector bundles over a compact non-K\"ahler manifold $(X, \omega)$ with the Hermitian metric $\omega$ satisfying the Gauduchon and Astheno-K\"ahler conditions. We prove that
 numerically flatness is equivalent to numerically effectiveness with vanishing first Chern number, semistablity with vanishing first and second Chern numbers, approximate Hermitian flatness and the existence of a filtration whose quotients are Hermitian flat. This gives an affirmative answer to the question proposed by Demailly, Peternell and Schneider.
\end{abstract}

\section{introduction}
The notion of positivity plays an important role in algebraic geometry and complex geometry. Let $L$ be a line bundle over a compact complex manifold $X$. $L$ is said to be positive (semipositive) if there is a Hermitian metric $h$ on $L$ such that the curvature $\sqrt{-1}\Theta(L, h)>0$$(\geq 0)$. A natural generalization and more flexible notion is numerically effective (nef for short). When $X$ is projective, $L$ is said to be nef if $L\cdot C\geq 0$ for every compact curve $C\subset X.$ However, when $X$ is just a general compact complex manifold, there maybe no compact curves over $X$. Motived by the following property of nef line bundles over projective manifolds,
\begin{lemma}\mbox{\rm(\cite[Lemma 1.1]{DPS})}
Let $A$ be an ample line bundle over a projective manifold $X$. Then a line bundle $L$ is nef if and only if $L^k\otimes A$ is ample for every integer $k\geq 0.$
\end{lemma}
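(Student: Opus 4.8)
The plan is to prove the two implications separately, reducing everything to the behaviour of intersection numbers and invoking the Nakai--Moishezon criterion together with Kleiman's description of the nef cone as the closure of the ample cone. Throughout I write the tensor product additively, so that $L^{k}\otimes A$ corresponds to the divisor class $kL+A$.

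For the ``if'' direction, suppose $L^{k}\otimes A$ is ample for every integer $k\geq 0$. Let $C\subset X$ be an arbitrary irreducible curve. Since an ample line bundle meets every curve strictly positively, we have $(kL+A)\cdot C>0$, that is, $k\,(L\cdot C)+(A\cdot C)>0$, for all $k\geq 0$. Were $L\cdot C<0$, the left-hand side would become negative once $k$ exceeds $(A\cdot C)/|L\cdot C|$, a contradiction. Hence $L\cdot C\geq 0$ for every curve $C$, which is precisely the statement that $L$ is nef.

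For the ``only if'' direction I would apply the Nakai--Moishezon criterion: $kL+A$ is ample provided $(kL+A)^{d}\cdot V>0$ for every irreducible subvariety $V\subset X$ of dimension $d=\dim V$. Expanding by the binomial theorem gives
\[
(kL+A)^{d}\cdot V=\sum_{i=0}^{d}\binom{d}{i}k^{i}\,\bigl(L^{i}\cdot A^{d-i}\cdot V\bigr).
\]
The term $i=0$ equals $A^{d}\cdot V$, which is strictly positive because $A$ is ample. It then suffices to show that every mixed term $L^{i}\cdot A^{d-i}\cdot V$ is nonnegative. This is where the nef hypothesis enters: by Kleiman's theorem the nef class $L$ is a limit of ample $\mathbb{Q}$-classes, and $A$ is itself ample, so each $L^{i}\cdot A^{d-i}\cdot V$ is a limit of intersection numbers of ample classes on the $d$-dimensional variety $V$, hence nonnegative by continuity of the intersection pairing. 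Consequently the whole sum is strictly positive for every $k\geq 0$, and Nakai--Moishezon yields the ampleness of $kL+A$.

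The main obstacle is the nonnegativity of the mixed intersection numbers $L^{i}\cdot A^{d-i}\cdot V$ for a merely nef (not ample) $L$; this rests on the nontrivial fact that the nef cone is the closure of the ample cone, so that intersection-number positivity survives the limiting process. The ``if'' direction and the binomial bookkeeping are routine once this positivity is in hand.
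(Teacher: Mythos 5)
The paper does not actually prove this lemma --- it is quoted verbatim from \cite[Lemma 1.1]{DPS} as motivation for the curvature definition of nefness --- so there is no in-paper argument to compare against; your proposal has to stand on its own as a proof of the classical fact. Your ``if'' direction is correct and complete: ampleness of $kL+A$ forces $k(L\cdot C)+(A\cdot C)>0$ for all $k$, which rules out $L\cdot C<0$. The ``only if'' direction is also the standard route (Nakai--Moishezon plus the binomial expansion), and the bookkeeping is right.

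The one substantive weakness is the justification of the key positivity $L^{i}\cdot A^{d-i}\cdot V\geq 0$. You derive it from ``the nef cone is the closure of the ample cone,'' but that description of the nef cone is logically equivalent to the statement you are proving: the inclusion of the nef cone into the closure of the ample cone is precisely the assertion that $L+\epsilon A$ is ample for every nef $L$ and every $\epsilon>0$. Cited as a black box the argument is valid, but as a from-scratch proof it is circular. The non-circular version is Kleiman's numerical theorem, proved by induction on dimension: one first shows $D^{\dim W}\cdot W\geq 0$ for every nef $D$ and every irreducible $W$, and then handles the mixed terms by representing $A^{d-i}\cdot V$ (for $A$ ample, after passing to a very ample multiple) by an effective cycle of dimension $i$, to which the inductive inequality applies. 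If you replace the appeal to ``nef $=$ closure of ample'' by this induction, or explicitly flag that you are invoking Kleiman's theorem as a known external result rather than deriving it, the proof is complete.
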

Demailly, Peternell and Schneider (\cite{DPS}) generalized this definition to general compact complex manifolds in terms  of curvature, that is
\begin{definition}
 Let $(X,\omega)$ be an $n$-dimensional compact Hermitian manifold.  A line bundle $L$ over  $(X,\omega)$ is  said to be numerically effective, if for every $\epsilon>0,$ there exists a smooth metric $h_{\epsilon}$ on $L$ such that  the curvature $\sqrt{-1}\Theta(L, h_{\epsilon})=\sqrt{-1}\bar{\partial}\partial\log h_{\epsilon}\geq-\epsilon\omega$.
 \end{definition}
 This means the curvature of $L$ can have arbitrary small negative part. It is obvious that a Hermitian flat line bundle is nef.  A vector bundle $E$ of rank $r\geq 2$ is said to be nef if the anti tautological line  bundle $\mathcal{O}_E(1)$ on the projective bundle $PE$ is nef. $E$ is said to be numerically flat (nflat for short), if both $E$ and its dual $E^*$ are nef.

In \cite{DPS},  the authors established the relationship between Hermitian flatness and nflatness. For line bundles, nflatness is equivalent to Hermitian flatness(\cite[Corollary 1.5]{DPS}). As to vector bundles of higher rank, they showed that a holomorphic vector bundle over a compact K\"ahler manifold  is nflat if and only if it admits a filtration by sub-bundles such that the quotients are Hermitian flat. Based on the above results, they raised an interesting question whether the above result holds in non-K\"ahler case and pointed out the difficulty is to show the second Chern number of a numerically flat vector bundle is zero. In \cite{DPS}, they obtained it by the Fulton-Lazarsfeld inequalities for Chern classes of nef vector bundles (\cite[Theorem 2.5]{DPS}) which only hold over compact K\"ahler manifolds. Under the assumption of $c_2(E)=0$, Biswas and Pingali (\cite{BP}) obtained a characterization of numerically flat bundle $E$ on a compact complex  manifold $(X,\omega)$ with $\omega$ satisfying Gauduchon ($\partial\bar{\partial}\omega^{n-1}=0$) and Astheno-K\"ahler ($\partial\bar{\partial}\omega^{n-2}=0$) conditions which make the first and second Chern numbers well-defined (\cite[Theorem 3.2]{BP}). 

 In this paper, we consider nflat vector bundles over  compact non-K\"ahler manifolds without the assumption of $c_2(E)=0$. In fact, we obtain the equivalence between nflatness, nefness with vanishing first Chern number, semistability with vanishing first and second Chern numbers, approximate Hermitian flatness and the existence of the filtration by sub-bundles whose quotients are Hermitian flat. That is

\begin{theorem}\label{thm2}
Let $(X,\omega)$ be a compact Hermitian manifold of dimension $n$ with $\omega$ satisfying $\partial\bar{\partial}\omega^{n-1}=\partial\bar{\partial}\omega^{n-2}=0.$ Let $(E,\bar{\partial}_E)$ be a holomorphic vector bundle over $X$. Then the following statements are equivalent:
\begin{enumerate}
\item  [\rm{(1)}] Numerically flat;
\item  [\rm{(2)}] Numerically effective with $ch_1(E)\cdot [\omega^{n-1}]=0;$
\item  [\rm{(3)}] Semistable with $ch_1(E)\cdot [\omega^{n-1}]=ch_2(E)\cdot [\omega^{n-2}]=0$;
\item [\rm{(4)}] Approximately Hermitian flat;
\item [\rm{(5)}] There exists a filtration
$$0\subset E_0\subset E_1\cdots E_l=E$$
 by sub-bundles  whose quotients are Hermitian flat.
\end{enumerate}

\end{theorem}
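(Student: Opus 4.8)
My plan is to prove the five conditions equivalent by running the cycle $(1)\Rightarrow(2)\Rightarrow(3)\Rightarrow(4)\Rightarrow(5)\Rightarrow(1)$, isolating the one genuinely hard arrow. Two of the arrows are soft. For $(1)\Rightarrow(2)$, numerical flatness gives that $E$ is nef by definition, so only $ch_1(E)\cdot[\omega^{n-1}]=0$ needs proof; since $\det E=\Lambda^{r}E$ and its dual are nef line bundles, I would use the elementary fact that a nef line bundle on a Gauduchon manifold has nonnegative degree (integrate $\sqrt{-1}\Theta(L,h_\epsilon)\geq-\epsilon\omega$ against $\omega^{n-1}$ and let $\epsilon\to0$, the degree being metric-independent because $\partial\bar\partial\omega^{n-1}=0$), applied to $\det E$ and $(\det E)^{*}$, to force $\deg(\det E)=0$. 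For $(5)\Rightarrow(1)$, a Hermitian flat bundle is numerically flat, and nefness is preserved under extensions (the sub, quotient, and their duals are all nef), so an induction on the length of the filtration closes the cycle.

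The heart of the theorem is $(2)\Rightarrow(3)$, and inside it the vanishing $ch_2(E)\cdot[\omega^{n-2}]=0$, which is exactly where \cite{DPS} had to invoke the Fulton--Lazarsfeld inequalities that are unavailable off the K\"ahler locus. Semistability is easy: every quotient of a nef bundle is nef, hence of nonnegative degree, so any subsheaf $S\subset E$ has $\deg S\leq\deg E=0$, i.e.\ $E$ is $\omega$-semistable of slope $0$; moreover $\det E$ is a nef line bundle of degree $0$ and so is Hermitian flat by the rank one case (the Gauduchon analogue of \cite[Corollary 1.5]{DPS}), giving $c_1(E)^2\cdot[\omega^{n-2}]=0$. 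The condition $\partial\bar\partial\omega^{n-2}=0$ makes $ch_2(E)\cdot[\omega^{n-2}]=\tfrac{1}{8\pi^{2}}\int_X\big(|F_h|^2-|\Lambda_\omega F_h|^2\big)\,dV_\omega$ a well-defined characteristic number, independent of the metric $h$ on $E$. Running the Hermitian--Yang--Mills flow on the semistable $E$ yields an approximate Hermitian--Einstein structure, i.e.\ metrics $h(t)$ with $\Lambda_\omega F_{h(t)}\to0$; inserting these gives $ch_2(E)\cdot[\omega^{n-2}]=\tfrac{1}{8\pi^{2}}\lim_t\int_X|F_{h(t)}|^2\,dV_\omega\geq0$. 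The reverse inequality $ch_2(E)\cdot[\omega^{n-2}]\leq0$, equivalently $c_2(E)\cdot[\omega^{n-2}]\geq0$, is the residue of Fulton--Lazarsfeld positivity that must be recovered from nefness directly, and this is the step I expect to be the main obstacle: I would try to extract it from the nef metrics on $\mathcal{O}_E(1)$ over $PE$, pushing the positivity of the tautological class down to $X$ while using $\partial\bar\partial\omega^{n-2}=0$ to tame the non-closedness of $\omega$. Combining the two bounds yields $ch_2(E)\cdot[\omega^{n-2}]=0$.

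For $(3)\Rightarrow(4)$ I would again use the flow. Decomposing $F=F^{\circ}+\tfrac1r(\operatorname{tr}F)\Id$ and applying the identity at the approximate Hermitian--Einstein metrics $h(t)$, the relation $0=ch_2(E)\cdot[\omega^{n-2}]=\tfrac{1}{8\pi^{2}}\lim_t\int_X|F^{\circ}_{h(t)}|^2\,dV_\omega+\tfrac{1}{2r}\,c_1(E)^2\cdot[\omega^{n-2}]$ together with the sign $c_1(E)^2\cdot[\omega^{n-2}]\geq0$ (from a Hermitian--Einstein metric on $\det E$) forces both $c_1(E)^2\cdot[\omega^{n-2}]=0$ and $\int_X|F^{\circ}_{h(t)}|^2\,dV_\omega\to0$, whence the trace and trace-free parts of the curvature both go to $0$ in $L^2$ and, after the standard elliptic upgrade along the flow, uniformly; this exhibits $E$ as approximately Hermitian flat. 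Finally, for $(4)\Rightarrow(5)$ I would pass to a Jordan--H\"older filtration: an approximately Hermitian flat bundle is semistable of slope $0$ with vanishing Chern numbers, each stable factor $G$ is stable of slope $0$ and hence carries a genuine Hermitian--Einstein metric with $\Lambda_\omega F=0$, so $ch_2(G)\cdot[\omega^{n-2}]=\tfrac{1}{8\pi^{2}}\int_X|F|^2\,dV_\omega\geq0$; by additivity of $ch_2(\cdot)\cdot[\omega^{n-2}]$ along the filtration (the nonnegative singularity contributions of the quotients being forced to vanish) these nonnegative numbers sum to $0$, so each $G$ has $ch_2(G)\cdot[\omega^{n-2}]=0$ and is therefore Hermitian flat. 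Since the flat quotients are locally free, the filtration is by subbundles, completing the cycle.
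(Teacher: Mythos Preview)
Your cycle $(1)\Rightarrow(2)\Rightarrow(3)\Rightarrow(4)\Rightarrow(5)\Rightarrow(1)$ and the identification of the ``hard'' content are exactly how the paper proceeds, but there is a sign error that runs through your entire treatment of $ch_2$ and causes you to swap the roles of the two inequalities in $(2)\Rightarrow(3)$. The correct identity (paper's equation~(1.5) with $\lambda=0$) is
\[
8\pi^{2}\,ch_2(E)\cdot\Big[\tfrac{\omega^{n-2}}{(n-2)!}\Big]=\int_X|\Lambda_\omega F_h|^2\,dV_\omega-\int_X|F_h|^2\,dV_\omega,
\]
the opposite of what you wrote. Consequently, the approximate Hermitian--Einstein structure on the semistable $E$ gives $ch_2(E)\cdot[\omega^{n-2}]\leq 0$ (this \emph{is} the Bogomolov inequality, Proposition~2.6), not $\geq 0$. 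The inequality you still need is $ch_2(E)\cdot[\omega^{n-2}]\geq 0$, equivalently $c_2(E)\cdot[\omega^{n-2}]\leq 0$, and this is what comes from nefness of $\mathcal{O}_E(1)$: the paper makes your ``push the positivity down to $X$'' precise via the pointwise push-forward formula $\pi_*\big(\Xi^{\,r+1}\big)=s_2(E,H)$ for Segre forms (Lemma~2.12), which, after checking that $\int_{PE}\Xi^{\,r+1}\wedge\pi^*\omega^{n-2}$ is metric-independent when $\partial\bar\partial\omega^{n-2}=0$, yields $s_2(E)\cdot[\omega^{n-2}]\geq 0$ and hence $c_2(E)\cdot[\omega^{n-2}]\leq c_1(E)^2\cdot[\omega^{n-2}]=0$. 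So your instinct about where the Fulton--Lazarsfeld residue lives is right, but the direction of the inequality it supplies is opposite to what you expected. The same sign slip reappears in your $(3)\Rightarrow(4)$ and $(4)\Rightarrow(5)$ paragraphs; for instance, a Hermitian--Einstein metric on $\det E$ gives $c_1(E)^2\cdot[\omega^{n-2}]\leq 0$, not $\geq 0$.

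Two further remarks. For $(3)\Rightarrow(4)$, once the sign is fixed you do not need the trace/trace-free decomposition at all: with $ch_2=0$ the identity reduces to $\int_X|F_{h}|^2=\int_X|\Lambda_\omega F_{h}|^2$, and the approximate Hermitian--Einstein metrics (from the continuity method in \cite{NZ2}, not the flow) already make the right side small. The genuinely nontrivial step, which you call the ``standard elliptic upgrade along the flow'', is \emph{not} standard in this non-K\"ahler setting: the paper runs the Hermitian--Yang--Mills flow from each $H_\varepsilon$, uses the energy monotonicity of Lemma~3.2 to keep $\int|F_{A_\varepsilon(t)}|^2$ uniformly small, and then invokes the small-energy regularity theorem for the flow (Lemma~3.3, proved in \cite{NZ3}) together with the parabolic Bochner inequality and mean-value inequality to convert $L^2$-smallness of curvature into $C^0$-smallness at a fixed later time. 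This is the technical core of Theorem~1.6 and should not be treated as routine.
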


\begin{remark}
It has been proved by Gauduchon (\cite{Gaud}) that if $X$ is compact, then there exists a Gauduchon metric  in the conformal class of every Hermitian metric. So any compact complex manifold $X$ admits a Hermitian metric $\omega $ satisfying  Gauduchon condition. If in addition $X$ is a complex surface, i.e. $\dim X=2$, then $\omega$ automatically also satisfies Astheno-K\"ahler condition.
By this fact,  it is easy to check that, if $(X_{1}, \omega_{1})$ is  a K\"ahler manifold and $(X_{2}, \omega_{2})$ is a Gauduchon surface, then $(X, \omega)=(X_{1}\times X_{2}, \omega_{1}+\omega_{2})$ also satisfies Gauduchon and Astheno-K\"ahler conditions. For another examples of Gauduchon Astheno-K\"ahler manifolds, see \cite{LY87,LYZ,FGV,LU}.
for instance.
\end{remark}

\begin{corollary}
Let $X$ be  a compact complex surface, and $(E,\bar{\partial}_E)$ be a holomorphic vector bundle over $X$. Then the following statements are equivalent:
\begin{enumerate}
\item  [\rm{(1)}] Numerically flat;
\item [\rm{(2)}] Approximately Hermitian flat;
\item [\rm{(3)}] There exists a filtration
$$0\subset E_0\subset E_1\cdots E_l=E$$
 by sub-bundles  whose quotients are Hermitian flat.
\end{enumerate}
\end{corollary}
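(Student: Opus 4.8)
The plan is to deduce the corollary directly from Theorem \ref{thm2} by exhibiting, on any compact complex surface, a Hermitian metric satisfying both structural hypotheses of that theorem, and then checking that the three properties retained in the corollary do not depend on the choice of such a metric. Note first that the corollary drops conditions (2) and (3) of Theorem \ref{thm2}: these involve the Chern numbers $ch_1(E)\cdot[\omega^{n-1}]$, $ch_2(E)\cdot[\omega^{n-2}]$ and the notion of semistability, all of which are defined relative to a fixed metric $\omega$, hence are not intrinsic to $X$ alone. The three surviving conditions are exactly the metric-free ones.

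First I would invoke Gauduchon's theorem (quoted in the Remark): on the compact surface $X$ one can choose a Hermitian metric $\omega$ in the conformal class of any given metric with $\partial\bar{\partial}\omega = 0$. Since $\dim X = 2$, this is precisely the Gauduchon condition $\partial\bar{\partial}\omega^{n-1} = \partial\bar{\partial}\omega = 0$ for $n = 2$; and the Astheno-K\"ahler condition $\partial\bar{\partial}\omega^{n-2} = \partial\bar{\partial}\omega^{0} = \partial\bar{\partial}(1) = 0$ holds trivially because $\omega^{0}$ is constant. Hence $(X,\omega)$ satisfies the hypotheses of Theorem \ref{thm2}, and for this particular $\omega$ the equivalences $(1)\Leftrightarrow(4)\Leftrightarrow(5)$ of Theorem \ref{thm2} are exactly the three conditions of the corollary.

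The remaining point, and the only place that needs a short argument, is that the corollary's statement refers to no metric, so I must verify that each of the three properties is independent of the choice of Hermitian metric on $X$. Numerical flatness is defined through the curvature inequality $\sqrt{-1}\Theta \geq -\epsilon\omega$; since $X$ is compact, any two Hermitian metrics satisfy $C^{-1}\omega \leq \omega' \leq C\omega$ for some constant $C > 0$, so a family of metrics realizing arbitrarily small negative part with respect to $\omega$ does the same with respect to $\omega'$ after rescaling $\epsilon$ by $C$. Thus nefness, and hence nflatness, is metric-independent. The same bounded-comparison argument shows that approximate Hermitian flatness—smallness of $|\Theta(E,h_\epsilon)|$ in the $\omega$-norm—is metric-independent, while the existence of a filtration by sub-bundles with Hermitian flat quotients is a holomorphic condition involving no background metric at all. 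Combining these with the equivalence supplied by Theorem \ref{thm2} for the Gauduchon metric $\omega$ yields the corollary. I expect no serious obstacle beyond these two bookkeeping checks, the metric-independence being the only step requiring verification.
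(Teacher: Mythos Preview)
Your proposal is correct and follows essentially the same approach as the paper: the corollary is deduced from Theorem \ref{thm2} via the Remark immediately preceding it, which observes that on a compact complex surface any Gauduchon metric automatically satisfies the Astheno-K\"ahler condition. Your additional verification that the three retained properties are metric-independent is a useful clarification that the paper leaves implicit.
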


We give an overview of the proof.
The key points are to show a numerically effective holomorphic bundle with vanishing first Chern number is semi-stable with vanishing first and second Chern numbers and a semistable holomorphic bundle with vanishing first and second Chern numbers is approximate Hermitian flat.

As to the former,  first following the argument of Step $1$ in the proof of \cite[Theorem 1.18]{DPS} and Lemma \ref{nefline}, we have nef vector bundles with vanishing first Chern number are semi-stable and the determinant line bundles are Hermitian flat. Then using the push forward formula of Segre forms by the first Chern form of the anti tautological line bundle $\mathcal{O}_E(1)$ on $PE$ and Bogomolov type inequality (Proposition \ref{Bogomolov}), we obtain that the second Chern number of nflat vector bundles is zero.

And as to the later,  we have the following theorem:
\begin{theorem}\label{thm1}
Let $(X,\omega)$ be an $n$-dimensional compact Hermitian manifold with $\omega$ satisfying $\partial\bar{\partial}\omega^{n-1}=\partial\bar{\partial}\omega^{n-2}=0.$ Let $(E,\bar{\partial}_E)$ be a holomorphic vector bundle of rank $r$. If $(E,\bar{\partial}_E)$ is semistable with $ch_1(E)\cdot [\omega^{n-1}]=ch_2(E)\cdot [\omega^{n-2}]=0,$ then it is approximate Hermitian flat.
\end{theorem}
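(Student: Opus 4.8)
The plan is to combine two ingredients: an approximate Hermitian--Einstein structure produced by semistability through the Hermitian--Yang--Mills flow, and a Bogomolov--L\"ubke type integral identity that converts the vanishing of $ch_2$ into $L^2$-control of the full curvature. Throughout, the Gauduchon condition makes the degree $\deg_\omega(\mathcal{F})=\int_X c_1(\mathcal{F})\wedge\omega^{n-1}$ and the slope well defined, while the Astheno--K\"ahler condition makes $\int_X\operatorname{tr}(F_h\wedge F_h)\wedge\omega^{n-2}$ a topological invariant, equal to a fixed multiple of $ch_2(E)\cdot[\omega^{n-2}]$ for every Hermitian metric $h$.

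First I would note that the Hermitian--Einstein constant vanishes: since $ch_1(E)\cdot[\omega^{n-1}]=0$ the slope $\mu_\omega(E)$ is zero, and the Einstein factor $\lambda$ is a fixed multiple of $\mu_\omega(E)$, hence $\lambda=0$. Then, using $\omega$-semistability, I would run the Hermitian--Yang--Mills flow $H_t$ on the compact Gauduchon manifold: it exists for all time, and semistability with $\mu_\omega(E)=0$ forces $\sup_X|\sqrt{-1}\Lambda_\omega F_{H_t}|_{H_t}\to 0$. Selecting metrics along the flow yields, for each $\epsilon>0$, a Hermitian metric $h_\epsilon$ with $\sup_X|\sqrt{-1}\Lambda_\omega F_{h_\epsilon}|_{h_\epsilon}<\epsilon$; that is, $E$ admits an approximate Hermitian--Einstein structure with factor $0$.

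The second ingredient is the algebraic Chern--Weil identity for the curvature $F_h$ of the Chern connection, which is of type $(1,1)$:
\begin{equation*}
\operatorname{tr}(F_h\wedge F_h)\wedge\frac{\omega^{n-2}}{(n-2)!}=\bigl(|F_h|^2-|\sqrt{-1}\Lambda_\omega F_h|^2\bigr)\frac{\omega^n}{n!}.
\end{equation*}
This holds pointwise on any Hermitian manifold. Integrating and using the Astheno--K\"ahler condition to identify the left-hand integral with a fixed multiple of $ch_2(E)\cdot[\omega^{n-2}]=0$, I obtain
\begin{equation*}
\int_X|F_h|^2\frac{\omega^n}{n!}=\int_X|\sqrt{-1}\Lambda_\omega F_h|^2\frac{\omega^n}{n!}
\end{equation*}
for every $h$. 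Applying this with $h=h_\epsilon$ gives $\|F_{h_\epsilon}\|_{L^2}^2=\|\sqrt{-1}\Lambda_\omega F_{h_\epsilon}\|_{L^2}^2\le \mathrm{Vol}(X)\,\epsilon^2\to 0$, the desired approximate flatness. As a sanity check, if $E$ were stable the flow would give a genuine metric with $\sqrt{-1}\Lambda_\omega F=0$, and the identity would then force $F\equiv 0$, i.e.\ honest Hermitian flatness.

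I expect the main obstacle to be the analytic input of the second paragraph on a merely Gauduchon manifold: proving that $\omega$-semistability with vanishing slope really yields the approximate Hermitian--Einstein structure, i.e.\ the long-time existence and the asymptotic decay of the mean curvature along the Hermitian--Yang--Mills flow, where K\"ahler identities are unavailable and the monotone Donaldson functional must be replaced by its Gauduchon analogue. A secondary difficulty, depending on the precise meaning of ``approximately Hermitian flat,'' is upgrading the $L^2$-smallness of the full curvature to a $C^0$ bound: because the approximating metrics may degenerate along the Harder--Narasimhan--Seshadri filtration this cannot come from elliptic regularity alone, and I would address it either through the filtration of statement (5) or by formulating the notion at the $L^2$ level.
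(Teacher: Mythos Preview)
Your outline is correct up to and including the $L^2$-smallness of the full curvature, and the Chern--Weil identity you write down is exactly the paper's equation~(\ref{equ9}). But the paper's Definition~2.6 of \emph{approximate Hermitian flat} requires $\sup_X|F_{H_\epsilon}|_{H_\epsilon}<\epsilon$, so the proof is not finished once $\|F_{h_\epsilon}\|_{L^2}\to 0$. You flag this yourself, yet neither of your proposed remedies works: invoking the filtration of (5) is circular, since in the paper the implication $(4)\Rightarrow(5)$ relies on approximate Hermitian flatness in the $C^0$ sense, and the implication being proved here is $(3)\Rightarrow(4)$; and weakening the definition to an $L^2$ statement is not an option, because the notion is fixed.

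The paper closes precisely this gap, and the mechanism is the content of Section~3. After obtaining metrics $H_\varepsilon$ with $\sup_X|\Lambda_\omega F_{H_\varepsilon}|_{H_\varepsilon}\to 0$ (via the continuity method of \cite{NZ2}, \emph{not} the flow; see below) and hence $\|F_{H_\varepsilon}\|_{L^2}\to 0$ by the identity you wrote, one runs the Hermitian--Yang--Mills flow with initial data $H_\varepsilon$. Under the Gauduchon plus Astheno--K\"ahler hypothesis the Yang--Mills energy is monotone (Lemma~\ref{lmm1}), so the $L^2$-norm of the curvature stays small along the entire flow. The small energy regularity theorem (Lemma~\ref{lmm4}) then gives a uniform pointwise curvature bound for $t\ge i_X^2$, independent of $\varepsilon$, which feeds into the Bochner inequality~(\ref{equ3}); a parabolic mean-value estimate then yields $\sup_X|F_{H_\varepsilon(t)}|^2\le C_3\,\|F_{H_\varepsilon}\|_{L^2}^2\to 0$ at a fixed later time. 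This extra smoothing step is the missing idea.

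A smaller point: you say that running the Hermitian--Yang--Mills flow on a Gauduchon manifold with semistable data forces $\sup_X|\sqrt{-1}\Lambda_\omega F_{H_t}|\to 0$. The paper explicitly states in the introduction that this is \emph{not} known in the non-K\"ahler setting (the usual Donaldson functional argument breaks down), which is why it uses the perturbed equation~(\ref{equ8}) from \cite{NZ2} to produce the approximate Hermitian--Einstein metrics. The conclusion you need (existence of such metrics) is still available, but the method you cite for it is not.
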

In \cite{NZ1}, by using the Hermitian-Yang-Mills flow,  we derived that a semistable holomorphic vector bundle with vanishing first and second Chern numbers over a compact K\"ahler manifold is approximate Hermitian flat.  Let $(E,\bar{\partial}_E)$ be a holomorphic vector bundle over compact complex manifold $(X,\omega)$ and $H$ be an arbitrary Hermitian metric on $E$. When $\omega$ is K\"ahler, by the Chern-Weil theory, we have

\begin{equation}\label{equ6}
\begin{split}
\int_X |F_{H,\bar{\partial}_E}|^2_{H}\frac{\omega^n}{n!}
=&\int_X|\sqrt{-1}\Lambda_{\omega}F_{H,\bar{\partial}_E}-\lambda\cdot \Id_E|^2_{H}\frac{\omega^n}{n!}-8\pi^2\int_X ch_2(E,H)\wedge \frac{\omega^{n-2}}{(n-2)!}\\
&+\lambda^2 \mbox{\rm rank}(E)\Vol(X,\omega),
\end{split}
\end{equation}
where $\lambda=\displaystyle{\frac{2\pi \mu_{\omega}(E)}{\Vol(X,\omega)}}.$  In fact, (\ref{equ6}) also holds when $\omega$ satisfies $\partial\bar{\partial}\omega^{n-1}=\partial\bar{\partial}\omega^{n-2}=0$. And  when $ch_1(E,H)\cdot[\omega^{n-1}]=ch_2(E,H)\cdot [\omega^{n-2}]=0$, we have
\begin{equation}\label{equ9}
\int_X |F_{H,\bar{\partial}_E}|^2_H=\int_X|\sqrt{-1}\Lambda_{\omega}F_{H,\bar{\partial}_E}|^2_H.
\end{equation}
 Since $\deg_{\omega}(E)=0,$  consider the following Hermitian-Yang-Mills flow
 \begin{equation}\label{HYM}
\begin{cases}
H(t)^{-1}\frac{\partial H}{\partial t}=-2\sqrt{-1}\Lambda_{\omega}F_{H,\bar{\partial}_E},\\
H(0)=H_0,
\end{cases}
\end{equation}
where $H_0$ is an arbitrary Hermitian metric. It has been proved (\cite[Equation 3.14]{LZ}) that when $(E,\bar{\partial}_E)$ is semi-stable,  along the flow (\ref{HYM}) , it holds
\begin{equation}\label{equ5}
\int_X|F_{H,\bar{\partial}_E}|^2_{H(t)}=\int_X|\sqrt{-1}\Lambda_{\omega}F_{H(t),\bar{\partial}_E}|^2_{H(t)}\rightarrow 0,\ \ \ \ \ \ \mbox{\rm as}\ \ t\rightarrow \infty.
\end{equation}
Then by the small energy regularity theorem of Yang-Mills flow, we (\cite[Equation 3.3]{NZ1}) obtained that
  $$\sup_X|F_{H(t),\bar{\partial}_E}|^2_{H(t)}\rightarrow 0,\ \ \ \ \ \mbox{\rm as}\ \ t\rightarrow \infty.$$

  However, when $\omega$ is not K\"ahler, we do not know whether (\ref{equ5}) still holds  along the Hermitian-Yang-Mills flow and can not generalize the argument in \cite{NZ1} directly.  In this paper, we combine the continuity method and the Hermitian-Yang-Mills flow to construct the approximate Hermitian flat structure. Fixing a proper Hermitian metric $K$ with $\mbox{tr}(\sqrt{-1}\Lambda_{\omega}F_{K,\bar{\partial}_E}-\lambda\cdot \Id_E)=0$, we consider the following perturbed equation
\begin{equation}\label{equ8}
\sqrt{-1}\Lambda_{\omega}F_{H_{\varepsilon},\bar{\partial}_E}-\lambda \cdot\Id_E+\varepsilon\log f_{\varepsilon}=0,\ \ \ \ \varepsilon\in (0,1],
\end{equation}
 where $f_{\varepsilon}=K^{-1}H_{\varepsilon}\in \Herm^+(E, K).$ It has been proved that (\ref{equ8}) is solvable for $\varepsilon\in (0,1]$.  Then for each $\varepsilon\in(0,1]$, consider the Hermitian-Yang-Mills flow with $H_{\varepsilon}$ as the initial metric
 \begin{equation*}
  \begin{cases}
  H^{-1}_{\varepsilon}(t)\frac{\partial H_{\varepsilon}(t)}{\partial t}=-2\sqrt{-1}\Lambda_{\omega}F_{H_{\varepsilon}(t),\bar{\partial}_E},\\
  H_{\varepsilon}(0)=H_{\varepsilon}.
  \end{cases}
  \end{equation*}

From \cite[Theorem 3.2]{NZ2}, we have that when $(E,\bar{\partial}_E)$ is semi-stable, it holds
$$\sup_X|\Lambda_{\omega}F_{H_{\varepsilon},\bar{\partial}_E}|_{H_{\varepsilon}}\rightarrow 0,\ \ \ \ \ \mbox{as}\ \ \varepsilon\rightarrow 0.$$
By Equality (\ref{equ9}) and Lemma \ref{lmm1}, it holds
\begin{align*}
\int_X|F_{A_{\varepsilon}(t)}|_{H_{\varepsilon}}^2\frac{\omega^n}{n!}
&\leq \int_X |F_{A_{\varepsilon}(0)}|^2_{H_{\varepsilon}}\frac{\omega^n}{n!}= \int_X |F_{H_{\varepsilon},\bar{\partial}_E}|^2_{H_{\varepsilon}}\frac{\omega^n}{n!}\\
&= \int_X|\sqrt{-1}\Lambda_{\omega}F_{H_{\varepsilon}, \bar{\partial}_E}|^2_{H_{\varepsilon}}\frac{\omega^n}{n!}\rightarrow 0,\ \ \varepsilon\rightarrow 0,
\end{align*}
where $A_{\varepsilon}(t)$ is the solution of (\ref{equ7}). This implies that when $\varepsilon$ is small enough, the $L^2$-norm of the curvature $F_{A_{\epsilon}(t)}$ is also small. Then we use the small energy regularity to obtain the approximate Hermitian flat structure.

This paper is organized as below. In Section $2$, we will recall some basic notions and related properties. In Section $3$, we give a detailed proof of Theorem \ref{thm1}.  In Section $4$, we give a detailed proof of Theorem \ref{thm2}.

\section{Preliminary}
In this section, we recall some definitions and properties of nef vector bundles needed in this paper.
\subsection{Some definitions}
Let $X$ be an $n$-dimensional compact complex manifold and $g$ be a Hermitian metric with associated $(1,1)$-form $\omega$. $g$ is called Gauduchon if $\omega$ satisfies $\partial\bar{\partial}\omega^{n-1}=0$.  If $\partial\bar{\partial}\omega^{n-2}=0$, the Hermitian metric $g$  is said to be Astheno-K\"ahler  which was introduced by Jost and Yau in \cite{JY93}. In this paper, we assume  $\omega$ satisfies $\partial\bar{\partial}\omega^{n-1}=\partial\bar{\partial}\omega^{n-2}=0.$

 Let $(L, h)$  be a Hermitian line bundle over $X$.  The $\omega$-degree of $L$ is defined by
\begin{equation*}
\deg_{\omega}(L):=\int_X c_1(L, A_h)\wedge\displaystyle{\frac{\omega^{n-1}}{(n-1)!}},
\end{equation*}
where $c_1(L,A_h)$ is the first Chern form of $L$ associated with the Chern connection  $A_h$ with respect to the Hermitian metric $h$.  Since $\partial\bar{\partial}\omega^{n-1}=0,$  $\deg_{\omega}(L)$ is  well defined and independent of the choice of  metric $h$ (\cite[p.~34-35]{MA}).

Now given a coherent analytic sheaf  $\mathcal{F}$ of rank $s$, we consider the  determinant line bundle  $\det{\mathcal{F}}=(\wedge^s\mathcal{F} )^{**}.$  Define the $\omega$-degree of $\mathcal{F}$ by
\begin{equation*}
\deg_{\omega}(\mathcal{F}):=\mbox{deg}_{\omega}(\det{\mathcal{F}}).
\end{equation*}
If $\mathcal{F}$ is non-trivial and torsion free, the $\omega$-slope of $\mathcal{F}$ is defined by
  $$\mu_{\omega}(\mathcal{F})=\frac{\deg_{\omega}(\mathcal{F})}{\mbox{rank}(\mathcal{F})}.$$

Let $(E,\bar{\partial}_E,H)$ be a rank $r$ holomorphic Hermitian vector bundle. Denote $D_{H,\bar{\partial}_E}$ the Chern connection of $(E, \bar{\partial}_E, H)$ and $F_{H, \bar{\partial}_E}=D^2_{H,\bar{\partial}_E}$ the Chern curvature. Then the corresponding Chern forms $c_k(E,H)\in A^{k,k}(X)$ are computed by
\begin{equation}
\det \left(\mbox{Id}_E+\frac{\sqrt{-1}t}{2\pi}F_{H,\bar{\partial}_E}\right)=\sum_{i=0}^{\min{\{r,n\}}} c_i(E,H)t^i.
\end{equation}
Let $H_1$ and $H_2$ be two Hermitian metrics on $E$. It has been proved by Donaldson \cite[Proposition 6]{Donaldson85} that for every $1\leq k\leq \min{\{r,n\}}$, there exists $R_{k-1}(H_1,H_2)\in A^{k-1,k-1}(X)$ such that
\begin{equation}
c_k(E,H_1)-c_k(E,H_2)=\sqrt{-1}\bar{\partial}\partial R_{k-1}(H_1, H_2).
\end{equation}

So when $\partial\bar{\partial}\omega^{n-1}=\partial\bar{\partial}\omega^{n-2}=0,$
\begin{equation}c_1(E)\cdot[\omega^{n-1}]=\int_X c_1(E, H)\wedge \omega^{n-1}\end{equation}
and
\begin{equation}
c_2(E)\cdot [\omega^{n-2}]=\int_X c_2(E,H)\wedge \omega^{n-2},\ \ \ \ \ c_1^2(E)\cdot[\omega^{n-2}]=\int_X c^2_1(E,H)\wedge \omega^{n-2}
\end{equation}
are well-defined and independent of the Hermitian metrics on $E$, where
$$[\omega^{n-1}]\in H_A^{n-1,n-1}(X),\ \ \ [\omega^{n-2}]\in H_A^{n-2,n-2}(X)$$
and
 $$c_1(E)\in H_{BC}^{1,1}(X),\ \ \ c_2(E), c_1^2(E)\in H_{BC}^{2,2}(X).$$

\begin{remark}
The Bott-Chern cohomology and Aeppli cohomology are defined by
\begin{equation*}
H^{\bullet,\bullet}_{BC}(X)=\frac{\mbox{\rm Ker} \partial \cap \mbox{\rm Ker} \bar{\partial}}{\mbox{\rm Imm} \partial\bar{\partial}}
\end{equation*}
and
\begin{equation*}
H_A^{\bullet,\bullet}(X)=\frac{\mbox{\rm Ker}\partial\bar{\partial}}{\mbox{\rm Imm }\partial\cap \mbox{\rm Imm} \bar{\partial}}.
\end{equation*}
\end{remark}

\begin{definition}
Let $(E, \bar{\partial}_E)$ be a holomorphic vector bundle over $X$. We say $E$ is  $\omega$-stable ($\omega$-semi-stable) in the sense of Mumford-Takemoto  if for every proper coherent sub-sheaf $ \mathcal{F}\hookrightarrow E$, it holds
\begin{equation*}
\mu_{\omega}(\mathcal{F})<\mu_{\omega}(E)(\mu_{\omega}(\mathcal{F})\leq\mu_{\omega}(E)).
\end{equation*}
\end{definition}

\begin{definition}
A Hermitian metric $H$ on $E$ is said to be $\omega$-Hermitian-Einstein if the Chern curvature $F_{H,\bar{\partial}_E}$  satisfies the Einstein condition
 $$ \sqrt{-1}\Lambda_{\omega}F_{H,\bar{\partial}_E}=\lambda\cdot \mbox{\rm Id}_{E},$$
 where $\lambda=\displaystyle{\frac{2\pi \mu_{\omega}(E)}{\Vol(X,\omega)}}.$
 \end{definition}

 \begin{remark}By \cite[Proposition 1.5 or Lemma 2.1]{Bru}, when checking the stability of a holomorphic vector bundle, we only need to consider proper saturated sub-sheaves, i.e. sub-sheaves with torsion free quotients.
 \end{remark}

 The classic Donaldson-Uhlenbeck-Yau theorem (\cite{NS,Donaldson85,UY,Donaldson87}) tells us there exist Hermitian-Einstein metrics on holomorphic vector bundles over compact K\"ahler manifolds if they are stable and was generalized by Li and Yau (\cite{LY87}) for general compact Gauduchon manifolds.
 When the K\"ahler form is understood, we omit the subscript $\omega$ in the above definitions.

 \begin{definition}
 A holomorphic vector bundle $(E,\bar{\partial}_E)$ is said to admit an approximate Hermitian-Einstein structure, if for every $\epsilon>0,$ there exists a Hermitian metric $H_{\epsilon}$, such that
 \begin{equation}
 \sup_X |\sqrt{-1}\Lambda_{\omega}F_{H_{\epsilon},\bar{\partial}_E}-\lambda\cdot \mbox{\rm Id}_{E}|_{H_{\epsilon}}<\epsilon.
 \end{equation}
\end{definition}
Kobayashi (\cite{Kobayashi}) introduced this notion for a holomorphic vector bundle. Similar with the relationship between stability and the existence of Hermitian-Einstein metrics, a holomorphic vector bundle admits an approximate Hermitian-Einstein structure if it is semistable. It was  proved by Kobayashi (\cite{Kobayashi}) for projective manifolds, by Jacob (\cite{Ja}), Li and  Zhang (\cite{LZ}) for compact K\"ahler manifolds and by Nie and Zhang (\cite{NZ2}) for general compact Gauduchon manifolds.
Furthermore, if $\omega$ is both Gauduchon and Astheno-K\"ahler, we have the following Bogomolov type inequality, which was first obtained by Bogomolov (\cite{Bogomolov}) for semi-stable holomorphic vector bundles on complex algebraic surfaces.
 \begin{proposition}\label{Bogomolov}
Let $(X,\omega)$ be an $n$-dimensional compact complex manifold with $\omega$ satisfying $\partial\bar{\partial}\omega^{n-1}=\partial\bar{\partial}\omega^{n-2}=0$ and $(E,\bar{\partial}_E)$ be a rank $r$ holomorphic vector bundle. If $E$ is semi-stable, then we have the  following Bogomolov type inequality
\begin{equation}
4\pi^2\left( 2c_2(E)-\frac{r-1}{r}c_1^2(E)\right)\cdot [\omega^{n-2}]\geq 0.
\end{equation}
\end{proposition}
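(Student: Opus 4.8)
The plan is to read off the quantity $4\pi^2\big(2c_2(E)-\tfrac{r-1}{r}c_1^2(E)\big)\cdot[\omega^{n-2}]$ from the Chern curvature of an approximate Hermitian-Einstein metric and to show that the only possibly negative contribution is controlled by $\Lambda_\omega F$. Since $E$ is semistable and $\omega$ is Gauduchon, the result of Nie and Zhang cited above guarantees that $E$ admits an approximate Hermitian-Einstein structure: for every $\varepsilon>0$ there is a metric $H_\varepsilon$ with $\sup_X|\sqrt{-1}\Lambda_\omega F_{H_\varepsilon,\bar{\partial}_E}-\lambda\,\Id_E|_{H_\varepsilon}<\varepsilon$. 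Writing $F=F_{H_\varepsilon,\bar{\partial}_E}$ and letting $F_0=F-\tfrac1r\operatorname{tr}(F)\,\Id_E$ be its trace-free part, a direct expansion of the defining determinant of the Chern forms gives the identity of $(2,2)$-forms $4\pi^2\big(2c_2(E,H_\varepsilon)-\tfrac{r-1}{r}c_1^2(E,H_\varepsilon)\big)=\operatorname{tr}(F\wedge F)-\tfrac1r(\operatorname{tr}F)^2=\operatorname{tr}(F_0\wedge F_0)$.

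Next I would establish the pointwise algebraic identity
\begin{equation*}
\operatorname{tr}(F_0\wedge F_0)\wedge\frac{\omega^{n-2}}{(n-2)!}=\big(|F_0|^2_{H_\varepsilon}-|\Lambda_\omega F_0|^2_{H_\varepsilon}\big)\frac{\omega^n}{n!},
\end{equation*}
which is checked at an arbitrary point in a frame where $\omega$ is diagonal, using that the Chern curvature is of type $(1,1)$ and that $\sqrt{-1}F$, rather than $F$ itself, carries the Hermitian-symmetric structure; the sign in front of $|F_0|^2$ is exactly the one produced by this skew-Hermitian structure. By the discussion in Section $2$ (Donaldson's transgression formula together with $\partial\bar\partial\omega^{n-1}=\partial\bar\partial\omega^{n-2}=0$), the integral of the left-hand side equals the metric-independent number $4\pi^2\big(2c_2(E)-\tfrac{r-1}{r}c_1^2(E)\big)\cdot[\omega^{n-2}]$. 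Hence for every $\varepsilon$,
\begin{equation*}
4\pi^2\Big(2c_2(E)-\frac{r-1}{r}c_1^2(E)\Big)\cdot[\omega^{n-2}]=\int_X\big(|F_0|^2_{H_\varepsilon}-|\Lambda_\omega F_0|^2_{H_\varepsilon}\big)\frac{\omega^n}{n!}\ \geq\ -\int_X|\Lambda_\omega F_0|^2_{H_\varepsilon}\frac{\omega^n}{n!}.
\end{equation*}

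Finally I would let $\varepsilon\to 0$. Taking the trace-free part of $\sqrt{-1}\Lambda_\omega F_{H_\varepsilon,\bar{\partial}_E}=\lambda\,\Id_E+O(\varepsilon)$ shows $\sqrt{-1}\Lambda_\omega F_0=\sqrt{-1}\Lambda_\omega F-\tfrac1r\operatorname{tr}(\sqrt{-1}\Lambda_\omega F)\,\Id_E=O(\varepsilon)$ uniformly on $X$, so $\int_X|\Lambda_\omega F_0|^2_{H_\varepsilon}\,\omega^n/n!\to 0$. The displayed lower bound then forces $4\pi^2\big(2c_2(E)-\tfrac{r-1}{r}c_1^2(E)\big)\cdot[\omega^{n-2}]\ge 0$, which is the claim. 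I expect the only genuine obstacle to be bookkeeping rather than analysis: getting the sign in the pointwise identity correct (it hinges on $\sqrt{-1}F$ being the Hermitian object) and confirming that the left-hand integral is truly metric-independent, which is precisely where both the Gauduchon and Astheno-K\"ahler hypotheses enter. The substantive analytic input---existence of an approximate Hermitian-Einstein structure for semistable bundles over Gauduchon manifolds---is supplied by the cited work of Nie and Zhang, so no new PDE estimates are needed here.
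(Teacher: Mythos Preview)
Your proposal is correct and follows essentially the same route as the paper's proof: both decompose the curvature into its trace-free part $F^\perp$ (your $F_0$), use the pointwise identity $\operatorname{tr}(F^\perp\wedge F^\perp)\wedge\frac{\omega^{n-2}}{(n-2)!}=(|F^\perp|^2-|\Lambda_\omega F^\perp|^2)\frac{\omega^n}{n!}$ to bound the Bogomolov number below by $-\int_X|\Lambda_\omega F^\perp|^2$, and then invoke the approximate Hermitian--Einstein structure from \cite{NZ2} to send this lower bound to zero. The only cosmetic difference is that the paper writes the trace-free part of $\sqrt{-1}\Lambda_\omega F-\lambda\,\Id_E$ explicitly, whereas you observe directly that $\Lambda_\omega F_0=O(\varepsilon)$; these are the same thing since the $\lambda\,\Id_E$ term drops out under the trace-free projection.
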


\begin{proof}
From the above,  when $\omega$ satisfies $\partial\bar{\partial}\omega^{n-1}=\partial\bar{\partial}\omega^{n-2}=0$, we have $$4\pi^2\left( 2c_2(E)-\frac{r-1}{r}c_1^2(E)\right)\cdot [\omega^{n-2}]$$
 is well-defined and independent of the choice of the Hermitian metrics on $(E,\bar{\partial}_E)$. Endowed $E$ with an arbitrary Hermitian metric $H$, we have
 \begin{equation}\label{equ15}
 \begin{split}
 &4\pi^2\left( 2c_2(E)-\frac{r-1}{r}c_1^2(E)\right)\cdot \frac{[\omega^{n-2}]}{(n-2)!}\\
=&4\pi^2\int_X \left( 2c_2(E,H)-\frac{r-1}{r}c_1(E,H)\wedge c_1(E,H)\right) \wedge \frac{\omega^{n-2}}{(n-2)!}\\
=&\int_X \mbox{\rm tr}(F_{H,\bar{\partial}_E}^{\perp}\wedge F_{H,\bar{\partial}_E}^{\perp}) \wedge \frac{\omega^{n-2}}{(n-2)!}\\
=&\int_X\left(|F_{H,\bar{\partial}_E}^{\perp}|_H^2-|\Lambda_{\omega} F_{H,\bar{\partial}_E}^{\perp}|_H^2\right) \frac{\omega^n}{n!}\\
\geq &-\int_X|\sqrt{-1}\Lambda_{\omega}F_{H,\bar{\partial}_E}-\lambda\cdot \Id_E-\frac{1}{r}\mbox{\rm tr}(\sqrt{-1}\Lambda_{\omega}F_{H,\bar{\partial}_E}-\lambda\cdot\Id_E)\Id_E|^2_H\frac{\omega^n}{n!},
 \end{split}
 \end{equation}
where $F_{H,\bar{\partial}_E}^{\perp}=F_{H,\bar{\partial}_E}-\frac{ \mbox{\rm tr} F_{H,\bar{\partial}_E}}{r}\mbox{\rm Id}_E$ is the trace free part of $F_{H,\bar{\partial}_E}.$

Since $(E,\bar{\partial}_E)$ is semi-stable, $(E,\bar{\partial}_E)$ admits an approximate Hermitian-Einstein structure (\cite{NZ2}), that is for every $\epsilon>0$, there exists $H_{\epsilon}$ such that
 $$ \sup_X |\sqrt{-1}\Lambda_{\omega}F_{H_{\epsilon},\bar{\partial}_E}-\lambda\cdot \mbox{\rm Id}_{E}|_{H_{\epsilon}}<\epsilon.$$
 Then
\begin{equation}\label{equ16}
\int_X|\sqrt{-1}\Lambda_{\omega}F_{H_{\epsilon}, \bar{\partial}_E}-\lambda \cdot\Id_E-\frac{1}{r}\mbox{\rm tr}(\sqrt{-1}\Lambda_{\omega}F_{H_{\epsilon}, \bar{\partial}_E}-\lambda \cdot\Id_E)\mbox{\rm Id}_E|^2_{H_{\epsilon}}\frac{\omega^n}{n!}\rightarrow 0,\ \ \ \ \epsilon \rightarrow 0.
\end{equation}
Therefore, by Equation (\ref{equ15}) and (\ref{equ16}), we have
\begin{equation*}
\begin{split}
4\pi^2\left( 2c_2(E)-\frac{r-1}{r}c_1^2(E)\right)\cdot \frac{[\omega^{n-2}]}{(n-2)!}
\geq 0.
\end{split}
\end{equation*}

\end{proof}

\begin{definition}
A holomorphic vector bundle $(E,\bar{\partial}_E)$ is said to be approximate Hermitian flat, if for every $\epsilon>0,$ there exists a Hermitian metric $H_{\epsilon}$ such that
  $$\sup_X |F_{H_{\epsilon}, \bar{\partial}_E}|_{H_{\epsilon}}<\epsilon.$$

\end{definition}

\subsection{Basic properties of nef vector bundles}

In this subsection, we will present some basic properties of nef vector bundles. For the detailed proof, please see reference \cite{DPS}.

\begin{proposition}\mbox{\rm(\cite[Corollary 1.5]{DPS})}\label{prop3}
$L$ is numerically flat if and only if it is Hermitian flat.
\end{proposition}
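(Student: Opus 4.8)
The reverse implication is immediate: a Hermitian flat metric on $L$ has vanishing curvature, and so does the induced metric on $L^{*}$, whence both $L$ and $L^{*}$ are nef and $L$ is numerically flat. So the content is the forward implication, and the plan is to extract the two numerical identities $\deg_{\omega}(L)=0$ and $c_1^2(L)\cdot[\omega^{n-2}]=0$ from numerical flatness, and then to convert them into an honest flat metric. For the degree: since $L$ is nef, for each $\varepsilon>0$ I choose $h_{\varepsilon}$ with $\sqrt{-1}\Theta(L,h_{\varepsilon})\geq-\varepsilon\omega$; pairing with $\omega^{n-1}/(n-1)!$ and integrating, and using that the Gauduchon condition makes $\deg_{\omega}$ independent of the metric, gives $2\pi\deg_{\omega}(L)\geq-\varepsilon\, C$ with $C=\int_X\omega^n/(n-1)!>0$, so letting $\varepsilon\to0$ yields $\deg_{\omega}(L)\geq0$. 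Running the same argument for $L^{*}$ gives $\deg_{\omega}(L)\leq0$, hence $\deg_{\omega}(L)=0$.

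For the self-intersection I would use the elementary pointwise fact that if $\alpha,\beta$ are semipositive real $(1,1)$-forms then $\alpha\wedge\beta\wedge\omega^{n-2}\geq0$ (diagonalising $\alpha$ with respect to $\omega$ reduces this to $\sum_{i\neq j}a_i b_{jj}\geq0$). Writing $\Theta_{\varepsilon}=\sqrt{-1}\Theta(L,h_{\varepsilon})$, so that $\Theta_{\varepsilon}+\varepsilon\omega\geq0$, and producing from the nefness of $L^{*}$, after inversion, metrics on $L$ with curvature $\Psi_{\delta}=\sqrt{-1}\Theta(L,g_{\delta})$ satisfying $\delta\omega-\Psi_{\delta}\geq0$, I obtain
\[
0\leq\int_X(\Theta_{\varepsilon}+\varepsilon\omega)\wedge(\delta\omega-\Psi_{\delta})\wedge\frac{\omega^{n-2}}{(n-2)!}.
\]
In the expansion, the two terms carrying a single factor $\Theta_{\varepsilon}\wedge\omega^{n-1}$ or $\Psi_{\delta}\wedge\omega^{n-1}$ are multiples of $\deg_{\omega}(L)=0$; the mixed term $\int_X\Theta_{\varepsilon}\wedge\Psi_{\delta}\wedge\omega^{n-2}/(n-2)!$ equals $(2\pi)^2 c_1^2(L)\cdot[\omega^{n-2}]/(n-2)!$ by the metric-independence already recorded in the preliminaries (this is where Astheno-K\"ahler enters); and the remaining term is $O(\varepsilon\delta)$. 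Letting $\varepsilon,\delta\to0$ gives $c_1^2(L)\cdot[\omega^{n-2}]\leq0$, while the symmetric computation with $(\Theta_{\varepsilon}+\varepsilon\omega)^2$ gives the reverse inequality, so $c_1^2(L)\cdot[\omega^{n-2}]=0$.

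Finally I would convert this into flatness. Because $\deg_{\omega}(L)=0$, solving the scalar equation $\Lambda_{\omega}\sqrt{-1}\partial\bar{\partial}\phi=-\Lambda_{\omega}\sqrt{-1}\Theta(L,h_{\mathrm{ref}})$—whose right-hand side has vanishing $\omega^n$-average precisely by the Gauduchon condition, and which is therefore solvable by elliptic theory—produces a metric $h_0$ with $\Lambda_{\omega}\sqrt{-1}\Theta(L,h_0)=0$. Thus $c_1(L,h_0)$ is primitive, and the Weil identity for primitive $(1,1)$-forms gives
\[
0=c_1^2(L)\cdot\frac{[\omega^{n-2}]}{(n-2)!}=\int_X c_1(L,h_0)\wedge c_1(L,h_0)\wedge\frac{\omega^{n-2}}{(n-2)!}=-\int_X|c_1(L,h_0)|^2\,\frac{\omega^n}{n!}.
\]
Hence $c_1(L,h_0)\equiv0$, that is $\sqrt{-1}\Theta(L,h_0)=0$, and $L$ is Hermitian flat.

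The two genuinely delicate points are: the passage from numerical flatness to the vanishing of the self-intersection, which forces one to exploit the nefness of $L$ and of $L^{*}$ \emph{together} with the Astheno-K\"ahler condition in order to cancel every cross term; and the step from being ``numerically trivial'' to being truly flat, where over a K\"ahler base one would simply invoke Hodge theory, whereas here the Gauduchon condition must supply the degree-zero Hermitian--Einstein metric and the Astheno-K\"ahler condition must supply the sign in the Weil identity that upgrades $c_1^2(L)\cdot[\omega^{n-2}]=0$ to pointwise vanishing of the curvature.
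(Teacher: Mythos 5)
Your argument is essentially correct, but it takes a genuinely different route from the one the paper relies on: the paper gives no proof of Proposition \ref{prop3} at all, citing \cite[Corollary 1.5]{DPS}, where the statement is proved on an \emph{arbitrary} compact complex manifold by a potential-theoretic argument (extracting closed positive currents in $c_1(L)$ and $-c_1(L)$ from the nef metrics and concluding both vanish). Your proof instead is a Chern--Weil/integral argument that uses the Gauduchon condition (to make $\deg_\omega$ metric-independent and to get $\deg_\omega(L)=0$ from nefness of $L$ and $L^*$) and the Astheno-K\"ahler condition (to make $c_1^2(L)\cdot[\omega^{n-2}]$ metric-independent and to run the Weil identity). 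What this buys is a completely elementary, current-free proof; what it costs is generality, since it only establishes the proposition under the hypotheses $\partial\bar\partial\omega^{n-1}=\partial\bar\partial\omega^{n-2}=0$ -- which is exactly the setting of this paper, so nothing is lost here. Note also that your argument is very close in spirit to the paper's own Lemma \ref{nefline}: once you have $\deg_\omega(L)=0$ from the degree argument, you could simply invoke that lemma (nef plus degree zero implies Hermitian flat) and skip the two-metric computation of $c_1^2(L)\cdot[\omega^{n-2}]\le 0$ entirely, since the lemma gets the complementary inequality $c_1^2(L)\cdot[\omega^{n-2}]\le 0$ from the primitivity of $c_1(L,h_0)$ alone.

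Two small points deserve tightening. First, the solvability of $\Lambda_\omega\sqrt{-1}\partial\bar\partial\phi=-\Lambda_\omega\sqrt{-1}\Theta(L,h_{\mathrm{ref}})$ is not governed by the vanishing of the $\omega^n$-average of the right-hand side: on a non-K\"ahler Gauduchon manifold this operator is not self-adjoint, and the obstruction is orthogonality to the positive generator $v$ of the kernel of the adjoint. The equation is nevertheless solvable, because the constant forced by that orthogonality condition must a posteriori equal $2\pi\deg_\omega(L)/\Vol(X,\omega)=0$; this is the standard line-bundle case of the Hermitian--Einstein problem on Gauduchon manifolds (\cite{Gaud,MA}), and is exactly what the paper asserts without proof in Lemma \ref{nefline}. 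Second, the identity $\int_X\Theta_\varepsilon\wedge\Psi_\delta\wedge\omega^{n-2}=(2\pi)^2c_1^2(L)\cdot[\omega^{n-2}]$ involves \emph{two different} metrics, whereas the preliminaries only record metric-independence of the diagonal integral $\int_X c_1(E,H)^2\wedge\omega^{n-2}$; you need the one-line extra step of writing $\Theta_\varepsilon-\Psi_\delta=\sqrt{-1}\bar\partial\partial\phi$ and integrating by parts against the closed form $\Psi_\delta$ using $\partial\bar\partial\omega^{n-2}=0$. Both gaps are easily filled, so the proof stands.
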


\begin{lemma}\label{nefline}
Let  $(X,\omega)$ be an $n$-dimensional compact Hermitian manifold with $\omega$ satisfying $\partial\bar{\partial}\omega^{n-1}=\partial\bar{\partial}\omega^{n-2}=0$. Let $L$ be a holomorphic line bundle on $X$. If $L$ is nef and $c_1(L)\cdot [\omega^{n-1}]=0$, then $L$ is Hermitian flat.
\end{lemma}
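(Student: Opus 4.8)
The plan is to construct directly a smooth Hermitian metric on $L$ whose Chern curvature vanishes identically; this is equivalent to checking that $L^*$ is nef and invoking Proposition \ref{prop3}, but the direct route is cleaner. The strategy rests on two quantities: the number $c_1^2(L)\cdot[\omega^{n-2}]$, which is metric-independent precisely by the Astheno-K\"ahler condition $\partial\bar\partial\omega^{n-2}=0$, and a metric whose curvature is $\omega$-trace-free. \emph{Step 1 (a trace-free metric).} Since $\omega$ is Gauduchon, the operator $\phi\mapsto\Lambda_\omega\sqrt{-1}\partial\bar\partial\phi$ is elliptic with kernel the constants, and its $L^2(\omega^n)$-cokernel is also the constants (because $\int_X\Lambda_\omega\sqrt{-1}\partial\bar\partial\phi\,\tfrac{\omega^n}{n!}=\int_X\phi\,\sqrt{-1}\partial\bar\partial\tfrac{\omega^{n-1}}{(n-1)!}=0$). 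Fixing a reference metric $h_0$ with $\Theta_0:=\sqrt{-1}\Theta(L,h_0)$, the hypothesis $c_1(L)\cdot[\omega^{n-1}]=0$ gives $\int_X\Lambda_\omega\Theta_0\,\tfrac{\omega^n}{n!}=2\pi\deg_\omega(L)=0$, so the equation $\Lambda_\omega\sqrt{-1}\partial\bar\partial\phi=-\Lambda_\omega\Theta_0$ is solvable; then $h_1:=h_0e^{-\phi}$ satisfies $\Lambda_\omega\Theta_1=0$, where $\Theta_1:=\sqrt{-1}\Theta(L,h_1)$.

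\emph{Step 2 (a Chern--Weil identity).} For any real $(1,1)$-form $\alpha$ one has the pointwise identity $\alpha\wedge\alpha\wedge\frac{\omega^{n-2}}{(n-2)!}=\bigl((\Lambda_\omega\alpha)^2-|\alpha|_\omega^2\bigr)\frac{\omega^n}{n!}$. Applying it to $\Theta_1$ and using $\Lambda_\omega\Theta_1=0$,
\[
\int_X|\Theta_1|_\omega^2\,\frac{\omega^n}{n!}=-\int_X\Theta_1\wedge\Theta_1\wedge\frac{\omega^{n-2}}{(n-2)!}=-(2\pi)^2\,c_1^2(L)\cdot\frac{[\omega^{n-2}]}{(n-2)!},
\]
where the last equality uses that $c_1^2(L)\cdot[\omega^{n-2}]$ is a metric-independent number (this is exactly where $\partial\bar\partial\omega^{n-2}=0$ is used). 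In particular $c_1^2(L)\cdot[\omega^{n-2}]\le0$, and it suffices to prove the reverse inequality.

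\emph{Step 3 (nefness forces $c_1^2(L)\cdot[\omega^{n-2}]\ge0$).} Now I use the defining metrics: for each $\varepsilon>0$ choose $h_\varepsilon$ with $\Theta_\varepsilon:=\sqrt{-1}\Theta(L,h_\varepsilon)\ge-\varepsilon\omega$, so $\beta_\varepsilon:=\Theta_\varepsilon+\varepsilon\omega\ge0$. Expanding $\Theta_\varepsilon=\beta_\varepsilon-\varepsilon\omega$, using metric-independence of the left side, and simplifying the cross term via $\int_X\beta_\varepsilon\wedge\omega^{n-1}=\int_X\Theta_\varepsilon\wedge\omega^{n-1}+\varepsilon\int_X\omega^n=\varepsilon\int_X\omega^n$ (the first integral vanishing as $2\pi\,c_1(L)\cdot[\omega^{n-1}]=0$),
\[
(2\pi)^2\,c_1^2(L)\cdot\frac{[\omega^{n-2}]}{(n-2)!}=\int_X\beta_\varepsilon^2\wedge\frac{\omega^{n-2}}{(n-2)!}-\frac{\varepsilon^2}{(n-2)!}\int_X\omega^n.
\]
Since $\beta_\varepsilon\ge0$, at each point its $\omega$-eigenvalues $a_1,\dots,a_n$ are nonnegative and $\beta_\varepsilon^2\wedge\omega^{n-2}$ is a positive multiple of $\sum_{j<k}a_ja_k\ge0$, so the first term is nonnegative. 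Letting $\varepsilon\to0$ yields $c_1^2(L)\cdot[\omega^{n-2}]\ge0$.

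Combining Steps 2 and 3 gives $\int_X|\Theta_1|_\omega^2\,\frac{\omega^n}{n!}=0$, hence $\Theta_1\equiv0$ and $h_1$ is Hermitian flat. The main obstacle is Step 3: one must extract nonnegativity of a fixed topological number from the family of approximate metrics, and this hinges on the pointwise positivity of $\beta_\varepsilon^2\wedge\omega^{n-2}$ together with the exact cancellation of the $O(\varepsilon)$ cross term, which is forced by the vanishing degree.
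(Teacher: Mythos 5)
Your proof is correct and follows essentially the same route as the paper's: both establish $c_1^2(L)\cdot[\omega^{n-2}]\ge 0$ by expanding $\int_X(\sqrt{-1}\Theta(L,h_\varepsilon)+\varepsilon\omega)^2\wedge\omega^{n-2}\ge 0$ and letting $\varepsilon\to 0$, and both establish $c_1^2(L)\cdot[\omega^{n-2}]=-\tfrac{1}{4\pi^2}\int_X|\Theta|^2\le 0$ via a metric with $\Lambda_\omega$-trace-free curvature and the pointwise identity $\alpha^2\wedge\tfrac{\omega^{n-2}}{(n-2)!}=\bigl((\Lambda_\omega\alpha)^2-|\alpha|_\omega^2\bigr)\tfrac{\omega^n}{n!}$. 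You merely spell out details the paper leaves implicit (solvability of the Gauduchon equation for the trace-free metric, the cancellation of the cross term, the eigenvalue argument for positivity), so no further comment is needed.
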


\begin{proof}
From the above, we have when $\partial\bar{\partial}\omega^{n-1}=\partial\bar{\partial}\omega^{n-2}=0$, $c_1(L)\cdot [\omega^{n-1}]$ and
$c_1(L)^2\cdot [\omega^{n-2}]$ are well-defined and can be computed by the Chern form $\Theta(L, h)$ of an arbitrary Hermitian metric $h$ on $L$. On one hand, since $L$ is nef,  for every $\epsilon>0,$ there exists a Hermitian metric $h_{\epsilon}$ such that $\sqrt{-1}\Theta(L, h_{\epsilon})\geq -\epsilon\omega.$

So
\begin{align*}
&0\leq \int_X (\frac{\sqrt{-1}}{2\pi}\Theta(L, h_{\epsilon})+\epsilon\omega)^2\wedge \omega^{n-2}\\
&=\int_X (\frac{\sqrt{-1}}{2\pi}\Theta(L, h_{\epsilon}))^2\wedge \omega^{n-2}+2\epsilon\int_X\frac{\sqrt{-1}}{2\pi}\Theta(L, h_{\epsilon})\wedge \omega^{n-1}+\epsilon^2\int_X \omega^n\\
&\rightarrow c_1(L)^2\cdot[\omega^{n-2}],\ \ \ \ \epsilon\rightarrow 0.
\end{align*}
This implies
\begin{equation}\label{c1}
c_1(L)^2\cdot [\omega^{n-2}]\geq 0.
\end{equation}
And on the other hand, since $c_1(L)\cdot [\omega^{n-1}]=0$, we can find a Hermitian metric $h$ on $E$ such that

\begin{equation}\sqrt{-1}\Lambda_{\omega}\Theta(L)_h=0.
\end{equation}
From (\ref{equ6}), we have
\begin{equation}\label{c2}
\begin{split}
c_1(L)^2\cdot [\omega^{n-2}/(n-2)!]
&=2ch_2(L)\cdot  [\omega^{n-2}/(n-2)!]\\
&=\frac{1}{4\pi^2}\left(\int_X |\sqrt{-1}\Lambda_{\omega}\Theta(L,h)|^2-\int_X|\Theta(L,h)|^2\right)\\
&=-\frac{1}{4\pi^2}\int_X|\Theta(L, h)|^2.
\end{split}
\end{equation}

Combining (\ref{c1}) and (\ref{c2}), we have  $\Theta(L, h)=0$. This concludes the proof.
\end{proof}

Let $m$ be a positive integer and let $S^m E$ be the $m$-th symmetric power of $E$, then $\det S^mE=(\det E)^{N}$ where \begin{equation*}N=\frac{m\rank (S^mE)}{\rank (E)}.\end{equation*}
Together with \cite[Theorem 1.12]{DPS}, we can easily check that

\begin{proposition}\label{detnef}
Let $(E,\bar{\partial}_E)$ be a holomorphic vector bundle over $(X,\omega)$. If $E$ is nef, then $\det E$ is nef.
\end{proposition}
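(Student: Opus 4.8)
The goal is to prove Proposition \ref{detnef}: if $E$ is nef, then $\det E$ is nef. The plan is to exploit the relationship between the determinant line bundle and symmetric powers, together with the already-quoted fact that symmetric powers of nef bundles are nef (\cite[Theorem 1.12]{DPS}).

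First I would recall the algebraic identity $\det S^m E = (\det E)^N$ with $N = m\,\rank(S^m E)/\rank(E)$, which is stated just above the proposition. Since $N$ is a fixed positive integer depending only on $m$ and $\rank(E)$, the line bundle $(\det E)^N$ is nef whenever $\det(S^m E)$ is nef; conversely, for a \emph{line} bundle, being nef is equivalent to some (equivalently, every) positive tensor power being nef, because the defining curvature condition $\sqrt{-1}\Theta \geq -\epsilon\omega$ scales linearly under taking powers (if $h$ is a metric on $\det E$ with $\sqrt{-1}\Theta(\det E, h)\geq -\epsilon\omega$, then $h^{\otimes N}$ gives $\sqrt{-1}\Theta((\det E)^N, h^{\otimes N}) = N\sqrt{-1}\Theta(\det E, h) \geq -N\epsilon\omega$, and one absorbs the factor $N$ into $\epsilon$). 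So it suffices to show $\det(S^m E)$ is nef for some $m$.

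Next I would invoke \cite[Theorem 1.12]{DPS}, which guarantees that $S^m E$ is nef whenever $E$ is nef. It then remains to pass from nefness of the vector bundle $S^m E$ to nefness of its determinant line bundle $\det(S^m E)$. This is precisely the content of Step 1 in the proof of \cite[Theorem 1.18]{DPS}: for a nef vector bundle $V$, the line bundle $\det V$ is nef. The underlying mechanism is that $\det V$ is a quotient (indeed the top exterior power) of $V$, and quotients of nef bundles are nef; alternatively, one uses that the tautological approximate-positivity metrics on $\mathcal{O}_V(1)$ induce, via the Chern--Weil/Segre form formalism on $PV$, metrics on $\det V$ whose curvature has arbitrarily small negative part.

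Putting the pieces together: $E$ nef $\Rightarrow$ $S^m E$ nef (by \cite[Theorem 1.12]{DPS}) $\Rightarrow$ $\det(S^m E) = (\det E)^N$ nef (by the determinant-of-nef-bundle fact) $\Rightarrow$ $\det E$ nef (by the power-rescaling argument for line bundles). The main obstacle, and the only step requiring genuine input, is the passage from a nef vector bundle to its nef determinant; everything else is formal manipulation of the curvature inequality and the algebraic identity for $\det S^m E$. Since the paper explicitly cites \cite[Theorem 1.12]{DPS} alongside the symmetric-power identity and tells us we "can easily check" the result, I expect the intended proof to be short, essentially the chain of implications above with the determinant-nefness step borrowed directly from \cite{DPS}.
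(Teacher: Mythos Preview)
Your argument contains a circularity. You reduce ``$E$ nef $\Rightarrow$ $\det E$ nef'' to ``$S^m E$ nef $\Rightarrow$ $\det(S^m E)$ nef'', and then justify the latter by invoking ``nef bundle $\Rightarrow$ nef determinant'' from Step~1 of \cite[Theorem 1.18]{DPS}. But that is exactly the statement being proved; if it is already available for $S^m E$, it is available for $E$ directly, and the detour through symmetric powers buys nothing. Your fallback justification, that $\det V$ is a quotient of $V$, is simply false: $\Lambda^r V$ is a quotient of $V^{\otimes r}$, not of $V$ itself (for $r\geq 2$).

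The paper's intended use of \cite[Theorem 1.12]{DPS} is different from what you state. That theorem is not merely ``$S^m E$ is nef when $E$ is nef''; it is a \emph{metric} characterization: $E$ is nef if and only if for every $\epsilon>0$ and every $m\geq 1$ there exists a Hermitian metric $h_m$ on $S^m E$ whose Chern curvature satisfies $\Theta_{h_m}(S^m E)\geq -m\epsilon\,\omega\otimes\Id_{S^m E}$ in the sense of Griffiths. Taking the trace over the fibre, the induced metric $\det h_m$ on $\det(S^m E)=(\det E)^N$ then has curvature $\geq -m\epsilon\,\rank(S^m E)\,\omega$. Since $N=m\,\rank(S^m E)/\rank(E)$, the $N$-th root metric on $\det E$ has curvature $\geq -\epsilon\,\rank(E)\,\omega$. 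As $\epsilon>0$ is arbitrary and $\rank(E)$ is fixed, this shows $\det E$ is nef. The symmetric-power identity is there to make the scaling come out right, not to reduce to another instance of the same proposition.
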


By Proposition \ref{detnef} and  Proposition \ref{prop3}, we have

\begin{proposition}\label{prop5}
Let $(E,\bar{\partial}_E)$ be a holomorphic vector bundle over $(X,\omega)$. If $E$ is nflat, then $\det E$ is Hermitian flat.
\end{proposition}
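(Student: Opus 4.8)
The plan is to reduce the statement to the line-bundle case via the determinant, and then invoke the equivalence between numerical flatness and Hermitian flatness supplied by Proposition \ref{prop3}. First I would unwind the hypothesis: $E$ being nflat means by definition that both $E$ and its dual $E^*$ are nef. Since the target $\det E$ is a line bundle, Proposition \ref{prop3} tells us that it is Hermitian flat precisely when it is numerically flat, i.e. precisely when both $\det E$ and $(\det E)^*$ are nef. Thus the whole problem becomes: verify the nefness of these two line bundles.

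For $\det E$ itself, I would apply Proposition \ref{detnef} directly to $E$: since $E$ is nef, $\det E$ is nef. For the dual, the key observation is the standard identification $\det(E^*)\cong (\det E)^*$, which holds because $\wedge^r(E^*)\cong (\wedge^r E)^*$ for a rank $r$ bundle. Applying Proposition \ref{detnef} this time to the nef bundle $E^*$ shows that $\det(E^*)$ is nef, and hence $(\det E)^*$ is nef as well.

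Combining the two, $\det E$ and $(\det E)^*$ are both nef, so $\det E$ is numerically flat; by Proposition \ref{prop3} it is therefore Hermitian flat, which finishes the argument. I do not expect any substantial obstacle here, as the proof is essentially a formal consequence of the two quoted propositions. The only points needing a little care are the natural isomorphism $\det(E^*)\cong (\det E)^*$ and the bookkeeping that nflatness of $E$ is exactly what delivers nefness of $E^*$, which in turn is what controls the dual determinant $(\det E)^*$.
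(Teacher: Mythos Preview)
Your argument is correct and is exactly the approach the paper takes: the paper simply records that Proposition~\ref{prop5} follows from Proposition~\ref{detnef} and Proposition~\ref{prop3}, and your proof spells out precisely this, using $\det(E^{*})\cong(\det E)^{*}$ to handle the dual.
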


\begin{proposition}\mbox{\rm (\cite[Proposition 1.14]{DPS})}\label{prop4}
Let $E$ and $F$ be two holomorphic vector bundles over $X$. If $E$ and $F$ are nef, then $E\otimes F$ is nef.
\end{proposition}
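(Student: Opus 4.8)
The plan is to avoid analyzing the projectivization $P(E\otimes F)$ directly and instead realize $E\otimes F$ as a quotient of a bundle built from $E$ and $F$ by operations already known to preserve nefness. Nefness is defined through $\mathcal{O}_E(1)$ on $PE$, and this is awkward to control under tensor products; but the standard functorial properties collected in \cite[Theorem 1.12]{DPS}—that extensions (in particular direct sums) of nef bundles are nef, that symmetric powers of nef bundles are nef, and that holomorphic quotients of nef bundles are nef—hold over an arbitrary compact complex manifold, since they follow from the behaviour of the tautological line bundles under the corresponding morphisms of projective bundles together with the curvature definition of a nef line bundle. These are precisely the ingredients already used to deduce Proposition \ref{detnef}.

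First I would show that $E\oplus F$ is nef: since $E$ and $F$ are nef and $E\oplus F$ sits in the split extension $0\to E\to E\oplus F\to F\to 0$, nefness of $E\oplus F$ follows from the stability of nefness under extensions in \cite[Theorem 1.12]{DPS}. Next, since symmetric powers of nef bundles are nef, the bundle $S^2(E\oplus F)$ is nef. Now I would invoke the canonical decomposition
$$S^2(E\oplus F)\ \cong\ S^2E\ \oplus\ (E\otimes F)\ \oplus\ S^2F,$$
which exhibits $E\otimes F$ as a direct summand, hence (via the projection) as a holomorphic quotient bundle, of the nef bundle $S^2(E\oplus F)$. Because quotients of nef bundles are nef, $E\otimes F$ is nef, which is the assertion.

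The main obstacle is conceptual rather than computational. One cannot simply equip $E\otimes F$ with a tensor-product Hermitian metric and add the curvatures of $E$ and $F$: nefness in the sense of the definition above is a statement about $\mathcal{O}_E(1)$, and over a general non-K\"ahler $X$ it is not known to be equivalent to the existence of metrics whose Griffiths curvature is bounded below by $-\epsilon\omega$ (only the implication from such metrics to nefness is available). Consequently the whole argument must remain inside the class of operations for which preservation of nefness has been established abstractly in \cite[Theorem 1.12]{DPS}, and the splitting of $S^2(E\oplus F)$ is exactly the device that brings $E\otimes F$ into that class.
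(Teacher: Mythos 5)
The first thing to say is that the paper contains no proof of this proposition: it is quoted verbatim from \cite[Proposition 1.14]{DPS}, and the opening of Subsection 2.2 explicitly defers all such proofs to \cite{DPS}. So your proposal can only be measured against the argument in \cite{DPS}. Your algebra is correct: $S^2(E\oplus F)\cong S^2E\oplus(E\otimes F)\oplus S^2F$, the split extension shows $E\oplus F$ is nef once extensions of nef bundles are known to be nef (Proposition \ref{prop2}(2)), and a direct summand is a quotient bundle, so nefness would descend to $E\otimes F$ by Proposition \ref{prop2}(1). Two of your three inputs are therefore legitimately available.

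The gap is the third input, ``symmetric powers of nef bundles are nef,'' which you attribute to \cite[Theorem 1.12]{DPS}. That is not what Theorem 1.12 says. The way the present paper uses Theorem 1.12 --- deducing Proposition \ref{detnef} from the identity $\det S^mE=(\det E)^N$ --- only makes sense if Theorem 1.12 is a \emph{metric} characterization of nefness: for every $\epsilon>0$ there are $m$ and a Hermitian metric on $S^mE$ whose Griffiths curvature is bounded below by $-m\epsilon\,\omega\otimes\mathrm{Id}$ (taking traces then gives a metric on $(\det E)^N$ with curvature $\geq -C\epsilon\omega$). The assertion that $S^pE$ is itself nef whenever $E$ is nef is, in \cite{DPS}, part of Proposition 1.14 --- the very statement you are proving --- and its natural proof exhibits $S^pE$ as a quotient of $E^{\otimes p}$, i.e.\ it passes through the tensor-product property. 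As written, your argument therefore reduces the tensor-product half of Proposition 1.14 to the symmetric-power half of the same proposition without supplying the latter: it is circular. To close it you would need an independent proof that $S^2G$ is nef for $G$ nef, and on a general compact complex manifold the only available route is the Griffiths-curvature bookkeeping on high symmetric powers provided by the genuine Theorem 1.12 --- which is exactly the analytic content of the \cite{DPS} proof of the tensor-product statement itself, so the decomposition trick does not actually buy a shortcut. Your closing remark correctly observes that the Griffiths characterization fails for $E$ itself, but misses that Theorem 1.12 restores it at the level of $S^mE$ for large $m$, and that this is the tool the whole argument must run on.
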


\begin{proposition}\mbox{\rm (\cite[Proposition 1.15]{DPS})}\label{prop2}
Let $0\rightarrow F\rightarrow E\rightarrow Q\rightarrow 0$ be an exact sequence of holomorphic vector bundles. Then
        \begin{enumerate}
         \item If $E$ is nef, then $Q$ is nef;
       \item If $F$ and $Q$ are nef, then $E$ is nef;
       \item If $E$ and $(\det Q)^{-1}$ are nef, then $F$ is nef.
      \end{enumerate}
\end{proposition}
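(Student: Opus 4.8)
The three parts are of unequal difficulty, and I would organize the argument around a single principle: a bundle is nef exactly when the anti-tautological line bundle on its projectivization is nef, and nefness of line bundles is a curvature condition that is stable under restriction to submanifolds and under the operations already shown to preserve nefness, namely tensor products (Proposition \ref{prop4}) and passage to quotients (which is statement (1) itself). I would prove (1) first, deduce (3) from it by an exterior-algebra manipulation, and treat (2) as the genuine analytic core.

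For (1): the surjection $E\to Q$ induces a closed embedding $PQ\hookrightarrow PE$ under which $\mathcal{O}_E(1)$ restricts to $\mathcal{O}_Q(1)$. Nefness of a line bundle descends to any submanifold: restricting the approximating metrics $h_\epsilon$ with $\sqrt{-1}\Theta(\mathcal{O}_E(1),h_\epsilon)\ge-\epsilon\,\omega_{PE}$ to $PQ$, and comparing $\omega_{PE}|_{PQ}$ with a fixed reference form on $PQ$, yields metrics on $\mathcal{O}_Q(1)$ with arbitrarily small negative part. Hence $\mathcal{O}_Q(1)$, and therefore $Q$, is nef. As an immediate byproduct, since $\Lambda^kE$ and $S^kE$ are quotients of $E^{\otimes k}$, Proposition \ref{prop4} together with (1) shows that all exterior and symmetric powers of a nef bundle are again nef.

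For (3): write $p=\rank(F)$, $q=\rank(Q)$. Dualizing the sequence gives a surjection $E^*\to F^*$, hence a surjection $\Lambda^{p-1}E^*\to\Lambda^{p-1}F^*$; tensoring by the line bundle $\det F$ and using $F\cong\Lambda^{p-1}F^*\otimes\det F$ produces a surjection $\Lambda^{p-1}E^*\otimes\det F\to F$. The elementary identities $\Lambda^{p-1}E^*\cong\Lambda^{q+1}E\otimes(\det E)^{-1}$ and $\det F\cong\det E\otimes(\det Q)^{-1}$ now collapse the source to $\Lambda^{q+1}E\otimes(\det Q)^{-1}$, in which $\det E$ has cancelled. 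This bundle is nef: $\Lambda^{q+1}E$ is nef because $E$ is (by the byproduct above) and $(\det Q)^{-1}$ is nef by hypothesis, so their tensor product is nef by Proposition \ref{prop4}. Thus $F$ is realized as a quotient of a nef bundle, and (1) gives that $F$ is nef. Notably this needs neither Proposition \ref{detnef} nor part (2).

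For (2): this is where the real difficulty lies, and I would flag it as the main obstacle. In the quotient convention a sub-bundle $F\subset E$ induces no morphism $PF\to PE$, so the restriction argument of (1) is unavailable; moreover there is no exterior-algebra trick realizing the extension $E$ as a quotient of nef data as in (3); and one cannot simply manufacture a near-flat metric on $E$ from the nef structures of $F$ and $Q$, because nefness (a statement about $\mathcal{O}(1)$) does not by itself provide an approximately Griffiths-semipositive metric on the underlying bundle. The plan is therefore to work directly on $PE$ with $\mathcal{O}_E(1)$: fix a $C^\infty$ splitting $E\cong F\oplus Q$, so that $\bar\partial_E$ is upper triangular with off-diagonal block the second fundamental form $\beta$ encoding the extension class, build on $\mathcal{O}_E(1)$ the metric induced from $\epsilon$-nef metrics on $F$ and $Q$, and estimate its curvature. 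The fiberwise Fubini--Study part is positive; the indefinite contributions are those produced by $\beta$, and the key device is to rescale the splitting by a parameter $t$, which scales $\beta$ and lets one dominate the cross-terms for $t=t(\epsilon)$ chosen appropriately. I expect the hard point to be precisely the uniformity of the resulting lower bound $\sqrt{-1}\Theta(\mathcal{O}_E(1))\ge-\epsilon\,\omega_{PE}$ over all of $PE$, simultaneously in base and fiber directions and independent of the point within each fiber, where the positivity of the relative Fubini--Study form must be weighed against the merely nef (not positive) curvatures carried by $F$ and $Q$.
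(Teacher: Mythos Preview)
The paper does not prove this proposition at all; it is simply quoted from \cite[Proposition~1.15]{DPS} and used as a black box in the proof of Theorem~\ref{thm2}, so there is no in-paper argument to compare against. Your outline for (1) via the closed embedding $PQ\hookrightarrow PE$ and restriction of $\mathcal{O}_E(1)$, and for (3) via the exterior-algebra identity exhibiting $F$ as a quotient of $\Lambda^{q+1}E\otimes(\det Q)^{-1}$, are correct and are precisely the arguments given in \cite{DPS}; your sketch for (2) (rescaled $C^\infty$ splitting and curvature estimate on $\mathcal{O}_E(1)$) likewise matches the DPS strategy, with the uniformity issue you flag being exactly the point that requires care there.
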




\subsection{ Segre forms  } In this subsection, we will introduce the push forward formula of Segre forms which was proved by Guler(\cite{Guler}) for projective manifolds and by Diverio(\cite{Diverio}) for general compact complex manifolds.

Let $(E,\bar{\partial}_E)\rightarrow X$ be a rank $r$ holomorphic vector bundle and $c_{\bullet}(E)=1+c_1(E)+\cdots+c_r(E)\in H^{\bullet}(X,\mathbb{Z})$ the total Chern class of $E$. The inverse of $c_{\bullet}(E)$ is by definition the total Segre class $s_{\bullet}(E)=1+s_1(E)+\cdots+s_n(E)\in H^{\bullet}(X,\mathbb{Z})$.
Endow $(E,\bar{\partial}_E)$ with a Hermitian metric $H$. Then from the Chern-Weil theory, the Segre forms $s_k(E,H)$ can be defined inductively by the relation
\begin{equation}
s_k(E,H)+c_1(E,H)s_{k-1}(E,H)+\cdots+c_k(E,H)=0,\ \ \ \ 0\leq k\leq n.
\end{equation}
For example,

\begin{align}
&s_{1}(E,H)=-c_1(E,H),\\
&\label{segre2} s_2(E,H)=c_1(E,H)^2-c_2(E,H).
\end{align}

{\bf Push forward of forms.}   Let $M,\, N$ be oriented differential manifolds of dimension $m,\, n$ ($m>n$) and $f: M\rightarrow N$ be a proper submersion. Set $s=m-n$. Then for any smooth $(p+s)$-form $\eta$ on $M$, there exists a unique smooth $p$-form $\xi$ on $N$ such that the equality
\begin{equation}
\int_M\eta\wedge f^*\phi=\int_N\xi\wedge\phi
\end{equation}
holds for any smooth $(n-p)$-form $\phi$ on $N$ with compact support.

 Given a Hermitian metric $H$ on $E$, denote $h$ the induced  metric on $\mathcal{O}_E(1)\rightarrow PE$ and $ \Xi=\frac{\sqrt{-1}}{2\pi}\Theta(\mathcal{O}_E(1), h).$
Then we have  the following push forward formula of Segre forms:
\begin{lemma}\mbox{\rm (\cite[Proposition 1.1]{Diverio})}\label{lmm5}
For each $k=0,\cdots, n,$ the equality
\begin{equation}
\pi_{\star}(\Xi^{r-1+k})=s_k(E,H),
\end{equation}
holds, where $s_0(E, H)$ is the function on $X$ and constantly equal to $1$.
\end{lemma}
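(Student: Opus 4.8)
The plan is to reduce the push-forward identity to two ingredients --- an elementary normalization for fibre integration, and a pointwise (form-level) refinement of the Grothendieck relation --- and then to recover the Segre recursion from the excerpt by the projection formula and an induction on $k$.

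First I would record the fibre-integration facts. Since $\pi\colon PE\to X$ is a proper holomorphic submersion with fibres $\mathbb{P}^{r-1}$ of complex dimension $r-1$, the push-forward $\pi_{\star}$ lowers bidegree by $(r-1,r-1)$. Hence $\pi_{\star}(\Xi^{m})=0$ for $0\le m<r-1$ by degree reasons, while
\[
\pi_{\star}(\Xi^{r-1})=1,
\]
because the restriction of $\Xi=\frac{\sqrt{-1}}{2\pi}\Theta(\mathcal{O}_E(1),h)$ to a fibre is the normalized Fubini--Study form, whose top self-intersection integrates to $1$ over $\mathbb{P}^{r-1}$. In particular the case $k=0$ of the lemma already holds, both sides being $1$.

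The heart of the argument is the claim that the Chern--Weil forms built from the induced metrics satisfy
\[
\sum_{i=0}^{r}\pi^{*}c_i(E,H)\wedge\Xi^{\,r-i}=0
\]
as an identity of differential forms on $PE$ (not merely a cohomological relation). Granting this, I would wedge with $\Xi^{k-1}$ and apply $\pi_{\star}$, using the projection formula $\pi_{\star}(\pi^{*}c_i\wedge\alpha)=c_i\wedge\pi_{\star}(\alpha)$, to get
\[
\sum_{i=0}^{r}c_i(E,H)\wedge\pi_{\star}\big(\Xi^{\,r-1+(k-i)}\big)=0.
\]
Writing $\sigma_k:=\pi_{\star}(\Xi^{r-1+k})$ and inserting the vanishing and normalization above ($\sigma_j=0$ for $j<0$, $\sigma_0=1$, and $c_0(E,H)=1$), this collapses to
\[
\sigma_k+c_1(E,H)\,\sigma_{k-1}+\cdots+c_k(E,H)=0,
\]
which is exactly the recursion defining the Segre forms $s_k(E,H)$ in the excerpt. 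An induction on $k$, with base case $\sigma_0=1=s_0(E,H)$, then yields $\sigma_k=s_k(E,H)$, which is the assertion.

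The only non-formal step, and the one I expect to be the main obstacle, is the form-level Grothendieck relation: in cohomology it follows at once from the Whitney formula and $\operatorname{rank}Q=r-1$, but Chern--Weil forms are only multiplicative up to $\partial\bar{\partial}$-exact corrections for induced metrics, so the pointwise equality of forms is a genuinely stronger statement that must be proved by hand for the metric $h$ induced by $H$. I would derive it from the tautological exact sequence $0\to\mathcal{O}_E(-1)\to\pi^{*}E\to Q\to 0$ on $PE$ with the induced orthogonal-splitting metrics, using the second-fundamental-form formula to express $\Theta(\mathcal{O}_E(1),h)$ through the restriction of $\pi^{*}\Theta(E,H)$ and the fibrewise Fubini--Study curvature. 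Fixing a point of $PE$ and a unitary frame of $E$ normal at the corresponding base point, a direct linear-algebra computation then shows that $\Xi$ is annihilated by the pulled-back Chern polynomial of $(E,H)$ in exactly the required sense. Verifying that the mixed horizontal--vertical curvature terms assemble precisely into the pulled-back Chern forms $\pi^{*}c_i(E,H)$, with no surviving error terms, is the delicate metric-dependent computation at the core of the proof; the rest is formal bookkeeping.
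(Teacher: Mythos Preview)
The paper does not supply its own proof of this lemma: it is quoted verbatim from \cite[Proposition 1.1]{Diverio} and used as a black box. So there is nothing in the paper to compare your argument against beyond the citation itself.

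That said, your outline is essentially the strategy carried out in Diverio's paper (building on Mourougane and Guler): one establishes the form-level Grothendieck relation $\sum_{i=0}^{r}\pi^{*}c_i(E,H)\wedge\Xi^{r-i}=0$ by a local computation in a normal unitary frame, then pushes forward powers of $\Xi$ and reads off the Segre recursion via the projection formula and the normalization $\pi_{\star}(\Xi^{r-1})=1$. You have also correctly isolated the only genuinely nontrivial step --- that the Grothendieck relation holds at the level of Chern--Weil forms for the induced metric, not merely in cohomology --- and your sketch of how to obtain it (tautological sequence, second fundamental form, fibrewise Fubini--Study curvature, cancellation of mixed terms) is the right shape. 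The remainder of the argument (degree vanishing, fibre normalization, induction) is formal, exactly as you say.
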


\section{Proof of Theorem \ref{thm1}}

In this section, we will combine the continuity method and the Hermitian-Yang-Mills flow to construct the approximate Hermitian flat structure. First, we introduce the continuity method, the Hermitian-Yang-Mills flow and some related properties.

Let $(E,\bar{\partial}_E)$ be a holomorphic bundle over a compact Gauduchon manifold $(X,\omega)$.
Fix a proper Hermitian metric $K$ with $\mbox{tr}(\sqrt{-1}\Lambda_{\omega}F_{K,\bar{\partial}_E}-\lambda\cdot \mbox{Id}_E)=0$. Consider the perturbed equation
\begin{equation}\label{equ12}
\sqrt{-1}\Lambda_{\omega}F_{H_{\varepsilon},\bar{\partial}_E}-\lambda \cdot\Id_E+\varepsilon\log f_{\varepsilon}=0,\ \ \ \ \varepsilon\in (0,1],
\end{equation}
where $f_{\varepsilon}=K^{-1}\cdot H_{\varepsilon}$ and $\lambda=\frac{2\pi\mu_{\omega}(E)}{\Vol(X,\omega)}$.  When $(E, \bar{\partial}_E)$ is semi-stable, we studied the asymptotic properties as $\varepsilon\rightarrow 0$. In fact, we proved,

\begin{lemma}\mbox{\rm (\cite[Theorem 3.2]{NZ2})}\label{lmm3}
If $(E,\bar{\partial}_E)$ is semi-stable, then
\begin{equation}
\sup_X |\sqrt{-1}\Lambda_{\omega}F_{H_{\varepsilon},\bar{\partial}_E}-\lambda\cdot \Id_E|_{H_{\varepsilon}} \rightarrow 0,\ \ \ \ \varepsilon\rightarrow 0.
\end{equation}
\end{lemma}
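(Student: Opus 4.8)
The plan is to run the Uhlenbeck--Yau continuity-method philosophy, treating $\varepsilon$ as the continuity parameter and analysing the solutions $H_\varepsilon$ of the perturbed equation (\ref{equ12}) as $\varepsilon\to 0$. The starting observation is that the quantity to be controlled is exactly $\varepsilon\,\sup_X|\log f_\varepsilon|$: from (\ref{equ12}) we have, as endomorphisms of $E$,
\[
\sqrt{-1}\Lambda_\omega F_{H_\varepsilon,\bar\partial_E}-\lambda\cdot\Id_E=-\varepsilon\log f_\varepsilon,
\]
and since $\log f_\varepsilon=\log(K^{-1}H_\varepsilon)$ is self-adjoint with respect to both $K$ and $H_\varepsilon$ with the same real eigenvalues, its pointwise norm is metric independent, whence $\sup_X|\sqrt{-1}\Lambda_\omega F_{H_\varepsilon,\bar\partial_E}-\lambda\cdot\Id_E|_{H_\varepsilon}=\varepsilon\,\sup_X|\log f_\varepsilon|$. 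Before anything else I would record the normalization $\int_X\operatorname{tr}(\log f_\varepsilon)\,\tfrac{\omega^n}{n!}=0$, obtained by taking the trace of (\ref{equ12}) and integrating: the curvature term integrates to $2\pi\deg_\omega(E)=\lambda r\,\Vol(X,\omega)$ precisely because $\partial\bar\partial\omega^{n-1}=0$ makes the degree well defined and metric independent, so the leading terms cancel and $\varepsilon\int_X\operatorname{tr}(\log f_\varepsilon)=0$.

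The first technical step is a uniform $C^0$ estimate: there is a constant $C$ independent of $\varepsilon\in(0,1]$ with
\[
\sup_X|\log f_\varepsilon|\le C\Bigl(1+\int_X|\log f_\varepsilon|\,\tfrac{\omega^n}{n!}\Bigr).
\]
I would derive this from the standard inequality $\sqrt{-1}\Lambda_\omega\partial\bar\partial\log\operatorname{tr}(f_\varepsilon+f_\varepsilon^{-1})\le|\sqrt{-1}\Lambda_\omega F_{H_\varepsilon,\bar\partial_E}|_{H_\varepsilon}+|\sqrt{-1}\Lambda_\omega F_{K,\bar\partial_E}|_K$, whose right-hand side is $\le\varepsilon|\log f_\varepsilon|+|\lambda\cdot\Id_E|+|\sqrt{-1}\Lambda_\omega F_K|_K\le C(1+|\log f_\varepsilon|)$ by the equation, followed by a Moser iteration for this subsolution and the comparability of $\log\operatorname{tr}(f_\varepsilon+f_\varepsilon^{-1})$ with $|\log f_\varepsilon|$; the iteration uses the Gauduchon identity $\int_X\sqrt{-1}\Lambda_\omega\partial\bar\partial(\,\cdot\,)\,\tfrac{\omega^n}{n!}=0$. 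Granting this, it remains only to prove $\varepsilon\int_X|\log f_\varepsilon|\,\tfrac{\omega^n}{n!}\to 0$, since then $\varepsilon\sup_X|\log f_\varepsilon|\le C\varepsilon+C\varepsilon\int_X|\log f_\varepsilon|\to 0$ and the lemma follows.

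I would prove this last limit by contradiction, and this is where semistability enters. Suppose $\varepsilon_i l_{\varepsilon_i}\ge\delta>0$ along $\varepsilon_i\to0$, where $l_\varepsilon:=\int_X|\log f_\varepsilon|\,\tfrac{\omega^n}{n!}$; then necessarily $l_{\varepsilon_i}\to\infty$, so I set $u_i:=l_{\varepsilon_i}^{-1}\log f_{\varepsilon_i}$, which satisfies $\int_X|u_i|\,\tfrac{\omega^n}{n!}=1$ and $\sup_X|u_i|\le C$ by the $C^0$ estimate. Pairing (\ref{equ12}) with $\log f_{\varepsilon_i}$, integrating, and carrying out Simpson's integration by parts (valid here because $\partial\bar\partial\omega^{n-1}=0$ annihilates the torsion terms) gives
\[
Q_i+\varepsilon_i\int_X|\log f_{\varepsilon_i}|^2\,\tfrac{\omega^n}{n!}=\int_X\operatorname{tr}\bigl((\sqrt{-1}\Lambda_\omega F_{K,\bar\partial_E}-\lambda\cdot\Id_E)\log f_{\varepsilon_i}\bigr)\,\tfrac{\omega^n}{n!},
\]
where $Q_i\ge 0$ is Simpson's Dirichlet-type energy of $\log f_{\varepsilon_i}$. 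The standard weak-compactness argument of Uhlenbeck--Yau and Simpson extracts a weak $L^2_1$ limit $u_\infty$ with $\int_X|u_\infty|\,\tfrac{\omega^n}{n!}=1$ and $\int_X\operatorname{tr}u_\infty=0$; Simpson's analysis shows the eigenvalues of $u_\infty$ are constant, its spectral projections $P_b$ are weakly holomorphic and hence, by the Uhlenbeck--Yau regularity theorem, define coherent subsheaves $\mathcal F_b\subset E$, while the normalization forces at least two distinct eigenvalues. The decisive point is the perturbation term: by Cauchy--Schwarz $\int_X|\log f_{\varepsilon_i}|^2\ge l_{\varepsilon_i}^2/\Vol(X,\omega)$, so $\varepsilon_i l_{\varepsilon_i}^{-1}\int_X|\log f_{\varepsilon_i}|^2\ge\varepsilon_i l_{\varepsilon_i}/\Vol(X,\omega)\ge\delta/\Vol(X,\omega)>0$, and this positive quantity persists in the limit. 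Dividing the identity by $l_{\varepsilon_i}$, using lower semicontinuity $\liminf Q_i/l_{\varepsilon_i}\ge\sum_b(\lambda_b-\lambda_{b+1})|\bar\partial P_b|^2_{L^2}$, the Simpson energies cancel against the $|\bar\partial P_b|^2$ terms coming from the subsheaf degree formula, leaving
\[
\sum_b(\lambda_b-\lambda_{b+1})\,\operatorname{rank}(\mathcal F_b)\,\bigl(\mu_\omega(\mathcal F_b)-\mu_\omega(E)\bigr)\ >\ 0 ,
\]
with $\lambda_1>\cdots$ the eigenvalues of $u_\infty$. Since each $\lambda_b-\lambda_{b+1}>0$, some $\mathcal F_b$ has $\mu_\omega(\mathcal F_b)>\mu_\omega(E)$, contradicting semistability; this rules out the divergent case and completes the proof.

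I expect the main obstacle to be precisely this Uhlenbeck--Yau-type limiting argument in the merely Gauduchon (non-K\"ahler) setting. Two features require care. First, Simpson's integration-by-parts identity and the vanishing $\int_X\sqrt{-1}\Lambda_\omega\partial\bar\partial(\,\cdot\,)\,\tfrac{\omega^n}{n!}=0$ must be established with the torsion terms controlled using only $\partial\bar\partial\omega^{n-1}=0$ in place of the K\"ahler identities. Second, one must verify the delicate spectral convergence (constancy of the eigenvalues of $u_\infty$, weak holomorphicity of its projections, and the lower semicontinuity and exact cancellation of the Simpson energies against the degree-formula terms) and invoke the regularity theorem identifying weakly holomorphic subbundles with coherent subsheaves on a general compact Hermitian manifold. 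By comparison, the reduction, the trace normalization, and the Moser iteration behind the $C^0$ estimate are routine once the Gauduchon substitutes for the K\"ahler identities are in place.
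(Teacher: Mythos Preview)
The paper does not supply its own proof of this lemma: it is quoted verbatim as \cite[Theorem~3.2]{NZ2} and used as a black box. Your proposal is a faithful reconstruction of the argument in that reference---the Uhlenbeck--Yau/Simpson continuity-method scheme, adapted to the Gauduchon setting, with the $\varepsilon$-perturbation term supplying the strict inequality that upgrades the destabilizing conclusion from ``stable'' to ``semistable''---so there is nothing in the present paper to compare against, and your outline matches the cited source's approach.
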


Given an arbitrary metric $H_0$ on $(E, \bar{\partial}_E)$, consider the Hermitian-Yang-Mills flow,
\begin{equation}\label{equ1}
\begin{cases}
H(t)^{-1}\frac{\partial H(t)}{\partial t}=-2(\sqrt{-1}\Lambda_{\omega}F_{H(t),\bar{\partial}_E}-\lambda \cdot \Id_E),\\
H(0)=H_0.
\end{cases}
\end{equation}
 Denote the space of connections of $E$ compatible with $H_{0}$ by
$\mathcal{A}_{H_0}$, the space of unitary integrable connections of $E$  by $\mathcal{A}_{H_0}^{1,1}$ and  the complex gauge group (resp. unitary gauge group) of  $(E, H_{0})$ by $\mathcal{G}^{\mathbb{C}}$ (resp. $\mathcal{G}$, where $\mathcal{G}=\{\sigma \in \mathcal{G}^{\mathbb{C}}| \sigma^{\ast H_0}\sigma=\mathrm{Id}\}$). $\mathcal{G}^{\mathbb{C}}$ acts on the space $\mathcal{A}_{H_0}$ as follows: for
$\sigma \in \mathcal{G}^{\mathbb{C}}$ and $A\in \mathcal{A}_{H_0}$,

\begin{equation}\label{id2}
\overline{\partial }_{\sigma(A)}=\sigma \circ \overline{\partial
}_{A}\circ \sigma^{-1}, \quad \partial _{\sigma (A)}=(\sigma^{\ast
H_{0}})^{-1} \circ \partial _{A}\circ \sigma^{\ast H_{0}}.
\end{equation}

From \cite{NZ3}, we have the heat flow (\ref{equ1}) is equivalent to the following flow
\begin{equation}\label{equ2}
\begin{cases}
\frac{\partial A(t)}{\partial t}=\sqrt{-1}(\bar{\partial}_A-\partial_A)\Lambda_{\omega}F_A,\\
A(0)=(\bar{\partial}_E, H_0).
\end{cases}
\end{equation}

The global existence and uniqueness of (\ref{equ2}) has been given in \cite{NZ3}. In fact, $A(t)=\sigma(t)(A_0)$, where $\sigma(t)\in \mathcal{G}^{\mathbb{C}}$ satisfies $\sigma(t)^{*H_0}\sigma(t)=H_0^{-1}H(t)$ and $H(t)$ is the long time solution of (\ref{equ1}). It is easy to check the following relations:

\begin{equation}\label{eq9}
\begin{split}
&F_{A(t)}=\sigma(t)\circ F_{H(t),\bar{\partial}_E}\circ \sigma(t)^{-1},\\
&|F_{A(t)}|^2_{H_0}=|F_{H(t),\bar{\partial}_E}|^2_{H(t)}.
\end{split}
\end{equation}
Along the flow (\ref{eq9}), we have the following Bochner type inequality
 \begin{equation}\label{equ3}
  (\triangle_g-\frac{\partial }{\partial t})|F_A|_{H_0}^2\geq 2|\nabla_A F_A|_{H_0}^2-C_1(1+|F_A|_{H_0}+|Ric|_g)|F_A|_{H_0}^2-C_1|F_A|_{H_0}|\nabla_A F_A|_{H_0},
\end{equation}
where $C_1>0$ depends on the geometry of $(X,\omega)$. And denoting
$$\YM(t)=\int_X |F_A|^2_{H_0}\frac{\omega^{n}}{n!},$$
we have the energy inequality

\begin{lemma}\mbox{\rm (\cite[Lemma 2.3]{NZ3})}\label{lmm1}
Let  $(X,\omega)$ be an $n$-dimensional compact Hermitian manifold with $\omega$ satisfying $\partial\bar{\partial}\omega^{n-1}=\partial\bar{\partial}\omega^{n-2}=0$.  Suppose $A(t)$ is a solution of the heat flow (\ref{equ2}) with initial data $A_0$. Then
\begin{equation}
\mbox{\rm YM}(t)+2\int_0^t\int_X\left|\frac{\partial A}{\partial t}\right|^2=\YM(0).
\end{equation}
\end{lemma}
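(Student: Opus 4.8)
The plan is to prove the differential form of the energy law, namely
$$\frac{d}{dt}\YM(t) = -2\int_X\Big|\frac{\partial A}{\partial t}\Big|^2_{H_0}\frac{\omega^n}{n!},$$
and then integrate over $[0,t]$. The point is that on a genuinely non-K\"ahler background the naive computation (differentiate $\YM$, use $\frac{\partial}{\partial t}F_A=d_A\frac{\partial A}{\partial t}$, integrate by parts, and invoke the K\"ahler identities to turn $d_A^{*}F_A$ into $-\sqrt{-1}(\bar{\partial}_A-\partial_A)\Lambda_\omega F_A=-\frac{\partial A}{\partial t}$) leaves torsion terms involving $d\omega$, and the two hypotheses $\partial\bar{\partial}\omega^{n-1}=\partial\bar{\partial}\omega^{n-2}=0$ are exactly what is needed to cancel them after integration.

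First I would trade the full curvature energy for the $\Lambda_\omega F$ energy by invoking the Chern--Weil identity (\ref{equ6}), which by hypothesis is valid in our setting. Writing $\Psi(t)=\sqrt{-1}\Lambda_{\omega}F_{H(t),\bar{\partial}_E}-\lambda\cdot\Id_E$ and applying (\ref{equ6}) to the metric $H(t)$, together with the isometry (\ref{eq9}), $|F_{A(t)}|_{H_0}=|F_{H(t),\bar{\partial}_E}|_{H(t)}$, gives
$$\YM(t) = \int_X|\Psi(t)|^2_{H(t)}\frac{\omega^n}{n!} - 8\pi^2\int_X ch_2(E,H(t))\wedge\frac{\omega^{n-2}}{(n-2)!} + \lambda^2\,\rank(E)\,\Vol(X,\omega).$$
Along the metric flow (\ref{equ1}) the holomorphic structure $\bar{\partial}_E$ is fixed, so $\int_X ch_2(E,H(t))\wedge\omega^{n-2}=ch_2(E)\cdot[\omega^{n-2}]$ is independent of the evolving metric $H(t)$; this is precisely the well-definedness recalled above, which rests on the Astheno-K\"ahler condition $\partial\bar{\partial}\omega^{n-2}=0$. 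Hence the last two terms are constant in $t$, and the problem is reduced to showing $\frac{d}{dt}\int_X|\Psi|^2_{H(t)}=-2\int_X|\partial A/\partial t|^2_{H_0}$.

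For this I would differentiate directly. Along (\ref{equ1}) one has $H^{-1}\frac{\partial H}{\partial t}=-2\Psi$, and the standard variation formula $\frac{\partial}{\partial t}F_H=\bar{\partial}_E\partial_H\big(H^{-1}\frac{\partial H}{\partial t}\big)$ (with $\partial_H$ the $(1,0)$-part of the Chern connection on $\End E$) gives $\frac{\partial}{\partial t}\Psi=-2\sqrt{-1}\Lambda_{\omega}\bar{\partial}_E\partial_{H}\Psi$, so that
$$\frac{d}{dt}\int_X|\Psi|^2_{H(t)}\frac{\omega^n}{n!} = -4\,\mathrm{Re}\int_X\big\langle \sqrt{-1}\Lambda_{\omega}\bar{\partial}_E\partial_H\Psi,\ \Psi\big\rangle_{H(t)}\frac{\omega^n}{n!}.$$
The operator $\sqrt{-1}\Lambda_{\omega}\bar{\partial}_E\partial_H$ is where the Gauduchon condition enters: integrating by parts, the torsion contribution organizes into $\mathrm{tr}(\partial_H\Psi\cdot\Psi)\wedge\sqrt{-1}\partial\bar{\partial}\omega^{n-1}$, which vanishes by $\partial\bar{\partial}\omega^{n-1}=0$, leaving the clean identity $\mathrm{Re}\int_X\langle\sqrt{-1}\Lambda_{\omega}\bar{\partial}_E\partial_H\Psi,\Psi\rangle=\int_X|\partial_H\Psi|^2$. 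Thus $\frac{d}{dt}\int_X|\Psi|^2=-4\int_X|\partial_H\Psi|^2$. Finally, transporting $\frac{\partial A}{\partial t}=\sqrt{-1}(\bar{\partial}_A-\partial_A)\Lambda_{\omega}F_A$ through (\ref{eq9}) and using that $\Psi$ is Hermitian, so $|\bar{\partial}_E\Psi|=|\partial_H\Psi|$, yields $|\partial A/\partial t|^2_{H_0}=2|\partial_H\Psi|^2_{H(t)}$; this matches the constant and gives $\frac{d}{dt}\YM(t)=-2\int_X|\partial A/\partial t|^2$. Integrating over $[0,t]$ produces the stated conservation law.

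The main obstacle is the torsion bookkeeping in the two integrations by parts. On a non-K\"ahler manifold each step a priori leaves contributions proportional to $\partial\omega^{n-1}$, $\bar{\partial}\omega^{n-2}$ and their first derivatives, and one must verify that after a second integration by parts they reassemble exactly into the multiples of $\sqrt{-1}\partial\bar{\partial}\omega^{n-1}$ and $\sqrt{-1}\partial\bar{\partial}\omega^{n-2}$ annihilated by the hypotheses. It is this exact cancellation, rather than a mere inequality with an uncontrolled error, that upgrades the energy balance to an equality.
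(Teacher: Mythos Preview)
The paper does not prove this lemma; it is quoted verbatim from \cite[Lemma~2.3]{NZ3} and used as a black box. Your argument is a correct self-contained proof, and the strategy---trade $\YM(t)$ for $\int_X|\Psi|^2_{H(t)}$ via the Chern--Weil identity (\ref{equ6}) (constancy of the $ch_2$ term using $\partial\bar\partial\omega^{n-2}=0$), then differentiate in the $H(t)$-picture and integrate by parts using $\partial\bar\partial\omega^{n-1}=0$---is the standard one.

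Two small points worth tightening. First, when you differentiate $\int_X|\Psi|^2_{H(t)}$ you should remark that, since $\Psi(t)$ is $H(t)$-Hermitian, one has $|\Psi|^2_{H(t)}=\mathrm{tr}(\Psi^2)$, so the time-dependence enters only through $\Psi$ and not through the inner product; this is why no extra ``metric variation'' term appears in your formula. Second, after the first integration by parts the torsion remainder is $\int_X\mathrm{tr}(\Psi\,\partial_H\Psi)\wedge\bar\partial\omega^{n-1}$, which is not yet a multiple of $\partial\bar\partial\omega^{n-1}$; the clean way to dispose of it is to write $\mathrm{tr}(\Psi\,\partial_H\Psi)=\tfrac12\,\partial\,\mathrm{tr}(\Psi^2)$ and integrate by parts once more, yielding $-\tfrac12\int_X\mathrm{tr}(\Psi^2)\,\partial\bar\partial\omega^{n-1}=0$ by the Gauduchon hypothesis. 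You essentially say this in your final paragraph, so this is only a matter of making the sketch explicit.
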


 Let $i_X$ be the infimum of the injective radius over $X$. For any $(x_0, t_0)\in X\times \mathbb{R}^+$ and $r\leq i_X,$ denote $P_r(x_0,t_0)=B_r(x_0)\times [t_0-r^2, t_0+r^2].$ We have the small energy regularity theorem

\begin{lemma}\mbox{\rm (\cite[Theorem 2.10]{NZ3})}\label{lmm4}
  Suppose that $A(t)$ is a smooth solution of the heat flow (\ref{equ2}) over $(X,\omega)$ with $\omega$ satisfying $\partial\bar{\partial}\omega^{n-1}=\partial\bar{\partial}\omega^{n-2}=0$. Then there exist positive constants $\epsilon_0$ and $\delta_0$  depending on the geometry of $(X,\omega)$ and ${\rm YM}(0)$, such that
 if for some $0<R<\min\{i_X/2, \sqrt{t_0}/2\}$, the inequality
$$ R^{2-2n}\int_{P_R(x_0,t_0)}|F_A|^2_{H_0}<\epsilon_0$$
holds, then  for any $\delta\in (0,\min\{\delta_0,1/4\})$, we have
\begin{eqnarray*}
  \sup\limits_{P_{\delta R}(x_0,t_0)}|F_A|^2_{H_0}<16(\delta R)^{-4}.
\end{eqnarray*}
\end{lemma}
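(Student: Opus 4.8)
The plan is to establish this as a small-energy (parabolic) regularity estimate for the heat flow (\ref{equ2}), the sole analytic input being the Bochner inequality (\ref{equ3}). First I would turn (\ref{equ3}) into a scalar differential inequality for $u:=|F_A|^2_{H_0}\ge 0$. Absorbing the cross term through $C_1|F_A|_{H_0}|\nabla_A F_A|_{H_0}\le |\nabla_A F_A|_{H_0}^2+\tfrac{C_1^2}{4}u$, discarding the remaining nonnegative $|\nabla_A F_A|_{H_0}^2$, and bounding $1+|Ric|_g$ by the geometry of $(X,\omega)$, inequality (\ref{equ3}) gives
$$\Big(\frac{\partial}{\partial t}-\triangle_g\Big)u\le C\,u+C\,u^{3/2}$$
on every parabolic cylinder inside the flow, with $C$ depending only on $C_1$ and $(X,\omega)$. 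The linear term is harmless; the whole difficulty sits in the cubic term $u^{3/2}=|F_A|^3_{H_0}$, which is supercritical for the scale-invariant energy $R^{2-2n}\int_{P_R}u$ we are handed (in real dimension $2n>4$ this $L^2$ smallness is strictly weaker than the $L^n$ smallness that would make a naive Moser iteration run).

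The strategy is to trade the scale-invariant smallness for a pointwise bound at a comparable scale, where the cubic term becomes subcritical. I would argue by contradiction: if the assertion failed, there would be points $(x_0,t_0)$, radii $R$, and $\delta$ with $\sup_{P_{\delta R}(x_0,t_0)}u\ge 16(\delta R)^{-4}$ while the scale-invariant energy tends to $0$. A Schoen-type maximisation of the scale-invariant weight $\big(\delta R-d_P(\cdot,(x_0,t_0))\big)^4u$, where $d_P$ is the parabolic distance and the weight vanishes on the parabolic boundary, selects an interior point $(x_1,t_1)$; the triangle inequality then yields a doubling estimate $u\le 16\,u(x_1,t_1)$ on a sub-cylinder, the factor $16=2^4$ coming from the fourth power in the weight. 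Rescaling the flow parabolically about $(x_1,t_1)$ by the length $u(x_1,t_1)^{-1/4}$ normalises the curvature so that $\tilde u(0,0)=1$ and, by the doubling estimate, $\tilde u\le 16$ on the unit cylinder $P_1$, while the pulled-back metric converges to the flat model.

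Because $\tilde u$ is now bounded, the cubic term is dominated linearly and $\tilde u$ is a subsolution of $(\partial_s-\tilde\triangle)\tilde u\le C\tilde u$ on $P_1$. A parabolic mean-value (local maximum) inequality then bounds $\tilde u(0,0)\le C\int_{P_1}\tilde u$, and the right-hand side is exactly the scale-invariant energy of the original flow at the blow-up scale. Since that tends to $0$, we obtain $1=\tilde u(0,0)\le o(1)$, a contradiction; quantitatively, fixing $\epsilon_0$ (and $\delta_0$) so that $C\epsilon_0<1$ forces $\sup_{P_{\delta R}}u<16(\delta R)^{-4}$, which is the claim.

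The main obstacle, and the step consuming all the hypotheses, is making sure the scale-invariant energy \emph{at the small blow-up scale} is still below the threshold. Since the blow-up scale is $\le R$ and $\rho\mapsto\rho^{2-2n}$ is decreasing, smallness of $R^{2-2n}\int_{P_R}u$ does not descend to smaller scales for free; one needs an almost-monotonicity formula asserting that $\rho\mapsto\rho^{2-2n}\int_{P_\rho}u$ is nondecreasing up to a controlled error. Deriving such a formula along (\ref{equ2}) is the delicate point in the non-K\"ahler setting, since the classical Yang-Mills monotonicity relies on second-order identities of the background metric; here those must be supplied by the Gauduchon and Astheno-K\"ahler relations $\partial\bar\partial\omega^{n-1}=\partial\bar\partial\omega^{n-2}=0$ together with the energy identity of Lemma \ref{lmm1}, which gives $\YM(t)\le\YM(0)$ and is precisely where the dependence of $\epsilon_0,\delta_0$ on $\YM(0)$ enters. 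Once the localised monotonicity is available, the point-selection and the mean-value inequality close the argument.
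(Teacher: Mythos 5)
Your outline matches, step for step, the actual proof of this result --- which, note, this paper does not prove at all: the lemma is imported verbatim from \cite[Theorem 2.10]{NZ3}, and the argument there is exactly your scheme of Bochner inequality (\ref{equ3}), Schoen-type point selection with the fourth-power weight (whence the constant $16=2^4$), parabolic rescaling to $\tilde u(0,0)=1$ with $\tilde u\le 16$ on $P_1$, and a Moser mean-value inequality producing the quantitative contradiction. The problem is that you have left unproved the one ingredient carrying all the analytic content: the localized almost-monotonicity of $\rho\mapsto\rho^{2-2n}\int_{P_\rho(x_1,t_1)}|F_A|^2_{H_0}$ (in practice, of the corresponding quantity weighted by a cut-off backward heat kernel). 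As you yourself observe, without it the smallness hypothesis at scale $R$ does not descend to the blow-up scale, so the mean-value step has nothing to feed on and the contradiction never closes. What you have written is therefore a correct \emph{reduction} of the lemma to the monotonicity formula, not a proof of the lemma; flagging the hard step does not discharge it.

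Two remarks on why this gap is genuine rather than routine. First, Lemma \ref{lmm1} is a \emph{global} energy identity: it yields $\YM(t)\le\YM(0)$ but contains no local, scale-invariant information, so it cannot by itself ``supply the second-order identities'' you invoke; its role in \cite{NZ3} is only to bound the error terms of the monotonicity formula, and that is precisely how the dependence of $\epsilon_0,\delta_0$ on $\YM(0)$ enters --- your attribution of that dependence is right, but it is an output of the error analysis, not a substitute for it. Second, the flow (\ref{equ2}) is \emph{not} the Yang--Mills flow when $\omega$ is non-K\"ahler: the K\"ahler identity $\sqrt{-1}(\bar{\partial}_A-\partial_A)\Lambda_{\omega}F_A=-D_A^{*}F_A$ fails by torsion terms of $\omega$, so Hamilton's or Chen--Shen's monotonicity cannot be quoted off the shelf. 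In \cite{NZ3} the almost-monotonicity is established directly for (\ref{equ2}) by a Chen--Shen-type computation in which the torsion of $\omega$ generates additional terms that must be integrated by parts and absorbed, with the conditions $\partial\bar{\partial}\omega^{n-1}=\partial\bar{\partial}\omega^{n-2}=0$ and the bound $\YM(t)\le\YM(0)$ controlling the remainders. That estimate is substantial independent work, not a formal consequence of the hypotheses; until it is supplied, your proposal establishes the correct architecture but not the statement.
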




{\bf Proof of Theorem \ref{thm1}:}

Since $ch_1(E,H)\cdot [\omega^{n-1}]=0,$ we have $\lambda=0.$  Fix a proper Hermitian metric $K$ on $(E,\bar{\partial}_E)$ with ${\rm tr}(\sqrt{-1}\Lambda_{\omega}F_{K,\bar{\partial}_E})=0$ and consider the following perturbed equation
\begin{equation}\label{equ10}
\sqrt{-1}\Lambda_{\omega}F_{H_{\varepsilon},\bar{\partial}_E}+\varepsilon \log f_{\varepsilon}=0,\ \ \ \ \varepsilon\in (0,1],
\end{equation}
where $f_{\varepsilon}=K^{-1}H_{\varepsilon}.$ It has been proved in \cite{LY87,MA} that (\ref{equ10}) is solvable for all $\varepsilon\in (0,1].$
Then for every $\varepsilon$, we consider the following Hermitian-Yang-Mills flow with $H_{\varepsilon}$ as the initial metric
 \begin{equation}
\begin{cases}
H_{\varepsilon}(t)^{-1}\frac{\partial H_{\varepsilon}(t)}{\partial t}=-2\sqrt{-1}\Lambda_{\omega}F_{H_{\varepsilon}(t),\bar{\partial}_E},\\
H_{\varepsilon}(0)=H_{\varepsilon},
\end{cases}
\end{equation}
and its gauge equivalent flow
\begin{equation}\label{equ7}
\begin{cases}
\frac{\partial A_{\varepsilon}(t)}{\partial t}=\sqrt{-1}(\bar{\partial}_{A_{\varepsilon}(t)}-\partial_{A_{\varepsilon}(t)})\Lambda_{\omega}F_{A_{\varepsilon}(t)},\\
A_{\varepsilon}(0)=(\bar{\partial}_E,H_\varepsilon).
\end{cases}
\end{equation}
Since $(E,\bar{\partial}_E)$ is semi-stable, by Lemma \ref{lmm3}, we have
\begin{equation*}
\sup_X|\sqrt{-1}\Lambda_{\omega}F_{H_{\varepsilon},\bar{\partial}_E}|_{H_{\varepsilon}}\rightarrow 0, \ \ \ \ \ \mbox{\rm as }\ \ \varepsilon\rightarrow 0.
\end{equation*}
By (\ref{equ6}),  (\ref{equ9}) and Lemma \ref{lmm1},  we have

\begin{align}
\int_X |F_{A_{\varepsilon}(t)}|^2_{H_{\varepsilon}}\frac{\omega^n}{n!}&\leq\int_X |F_{A_{\varepsilon}(0)}|^2_{H_{\varepsilon}}\frac{\omega^n}{n!}= \int_X |F_{H_{\varepsilon},\bar{\partial}_E}|^2_{H_{\varepsilon}}\frac{\omega^n}{n!}\\
&= \int_X|\sqrt{-1}\Lambda_{\omega}F_{H_{\varepsilon},\bar{\partial}_E}|^2_{H_{\varepsilon}}\frac{\omega^n}{n!}\rightarrow 0,\ \ \varepsilon\rightarrow 0.
\end{align}


This implies for every $\epsilon>0,$ there exists $\varepsilon(\epsilon)>0,$ such that when $\varepsilon <\varepsilon(\epsilon),$ it holds
\begin{equation}\label{equ11}
\int_X |F_{A_{\varepsilon}(t)}|^2_{H_{\varepsilon}}\frac{\omega^n}{n!}\leq \int_X|F_{A_{\varepsilon}(0)}|_{H_{\varepsilon}}\frac{\omega^n}{n!}<\epsilon.
\end{equation}
Particularly,  there exists $\varepsilon_0>0,$ such that when $\varepsilon<\varepsilon_0,$ it holds
$$\int_X|F_{A_{\varepsilon}(0)}|^2_{H_{\varepsilon}}\frac{\omega^n}{n!}<1.$$

So by the small energy regularity theorem (Lemma \ref{lmm4}), there exist uniform positive constants $\epsilon_0$ and $\delta_0$ depending only on the geometry of $(X,\omega)$, such that for any $(x_0,t_0)\in X\times \mathbb{R}^+$,  if for some $0<R<\min\{i_X/2,\sqrt{t_0}/2\}$,

\begin{equation*}
R^{2-2n}\int_{P_R(x_0,t_0)}|F_{A_{\varepsilon}(t)}|_{H_{\varepsilon}}^2<\epsilon_0
\end{equation*}
holds, then for any $\delta\in(0, \min\{\delta_0,1/4\}),$  we have
$$\sup_{P_{\delta R}(x_0,t_0)}|F_{A_{\varepsilon}(t)}|_{H_{\varepsilon}}^2<16(\delta R)^{-4}.$$
In addition, by (\ref{equ11}), setting  $\epsilon=\frac{1}{2}(\frac{i_X}{2})^{2n-4}\epsilon_0,$ we can find a positive constant $\varepsilon_1$, such that when $\varepsilon<\varepsilon_1$, it holds

 \begin{equation*}
 \int_X |F_{A_{\varepsilon}}|^2_{H_{\varepsilon}}(x,t)<\frac{1}{2}\left(\frac{i_X}{2}\right)^{2n-4}\epsilon_0,
 \end{equation*}
 for any $t>0.$
 Choose $R=i_X/2$. We have for any $x_0\in X$ and $\tau\geq i_X^2$, when $\varepsilon<\varepsilon_1$,  it holds
 \begin{equation}
R^{2-2n}\int_{P_R(x_0,\tau)} |F_{A_{\varepsilon}}|^2_{H_{\varepsilon}}(x,t)\leq R^{2-2n}\int_{\tau-R^2}^{\tau+R^2} \int_X|F_{A_{\varepsilon}}|^2_{H_{\varepsilon}}(x,t)\leq 2R^{4-2n}\times (\frac{1}{2}\epsilon_0 R^{2n-4})=\epsilon_0.
\end{equation}
Then from the small energy regularity theorem, we have
\begin{equation}
|F_{A_{\varepsilon}}|^2_{H_{\varepsilon}}(x_0, \tau)\leq \sup_{P_{\delta_0/2R}(x_0,\tau)}|F_{A_{\varepsilon}}|^2_{H_{\varepsilon}}(x, t)\leq 16\left(\frac{1}{2}\delta_0R\right)^{-4}=256(\delta_0R)^{-4}, \ \ \ \varepsilon<\varepsilon_1.
\end{equation}
This implies that when $\varepsilon<\varepsilon_1$,  $\sup_X|F_{A_{\varepsilon}}|_{H_{\varepsilon}}^2(\cdot, t)$ is uniformly bounded in $[i_X^2,\infty).$ From the Bochner type inequality (\ref{equ3}), when $\varepsilon<\varepsilon_1$, there exists a positive constant $C_2$ independent of $\varepsilon,$ such that
\begin{equation}
  (\triangle_g-\frac{\partial }{\partial t})|F_{A_{\varepsilon}}|_{H_{\varepsilon}}^2\geq -C_2|F_{A_{\varepsilon}}|_{H_{\varepsilon}}^2.
\end{equation}
Using the parabolic mean value inequality, we can find a positive constant $C_3$ independent of $\varepsilon$($\varepsilon<\varepsilon_1$), such that  for $t>i_X^2+1,$ it holds
\begin{equation}
\begin{split}
\sup_X|F_{A_{\varepsilon}}|^2_{H_{\varepsilon}}(\cdot, t)
&\leq C_3\int_X |F_{A_{\varepsilon}}|^2_{H_{\varepsilon}}(x,t-1)\leq C_3 \int_X |F_{A_{\varepsilon}}|^2_{H_{\varepsilon}}(\cdot,0)\\
&=C_3\int_X |F_{H_{\varepsilon},\bar{\partial}_E}|^2_{H_{\varepsilon}}=C_3\int_X |\sqrt{-1}\Lambda_{\omega}F_{H_{\varepsilon},\bar{\partial}_E}|_{H_{\varepsilon}}^2.
\end{split}
\end{equation}

Choosing  $t=i_X^2+2$, from the above, we have when $\varepsilon<\varepsilon_1,$
\begin{equation}
\sup_X|F_{H_{\varepsilon}(i_X^2+2),\bar{\partial}_E}|^2_{H_{\varepsilon}(i_X^2+2)}= \sup_X |F_{A_{\varepsilon}}|^2_{H_{\varepsilon}}(\cdot, i_X^2+2)\leq C_3\int_X |\sqrt{-1}\Lambda_{\omega}F_{H_{\varepsilon}}|_{H_{\varepsilon}}^2\rightarrow 0,\ \ \ \varepsilon \rightarrow 0.
\end{equation}
This implies  the existence of approximate Hermitian flat structure on semistable vector bundles with $ch_1(E,H)\cdot [\omega^{n-1}]=ch_2(E,H)\cdot[\omega^{n-2}]=0$.\\

\qed

\section{Proof of Theorem \ref{thm2}}
 In this section, we will give a detailed proof of Theorem \ref{thm2}.




{\bf Proof of Theorem \ref{thm2}:}

We first prove that $(1)\Rightarrow (2)$. We need only to show $ch_1(E,H)\cdot [\omega^{n-1}]=0.$ Since $(E,\bar{\partial}_E)$ is nflat, by Proposition \ref{prop5}, we have $\det E$ is Hermitian flat. This implies
\begin{align}
\label{equ12}ch_1(E,H)\cdot[\omega^{n-1}]= c_1(E)\cdot [\omega^{n-1}]=c_1(\det E)\cdot [\omega^{n-1}]=0.
\end{align}

\medskip

Then we prove $(2)\Rightarrow (3).$
Equality (\ref{equ12}) implies $\deg_{\omega} E=0.$ For any proper saturated sub-sheaf $\mathcal{F}\hookrightarrow E$, $E/\mathcal F$ is torsion free and $(E/\mathcal F)^*$ is a proper subsheaf of $E^*$. Following the argument in \cite[Theorem 1.18]{DPS}, it holds
\begin{equation}
\deg_{\omega} (E/\mathcal F)^*\leq 0.
\end{equation}
Together with $\deg_{\omega} E=0$, we have
\begin{equation*}\deg_{\omega}\mathcal F\leq 0.
\end{equation*}
This implies $(E,\bar{\partial}_E)$ is semi-stable. Then it remains to show $ch_2(E,H)\cdot[\omega^{n-2}]=0$.
Since $(E,\bar{\partial}_E)$ is nef with $ch_1(E,H)\cdot [\omega^{n-1}]=0,$  $\det E$ is nef with $ch_1(\det E)\cdot [\omega^{n-1}]=0$. By Lemma \ref{nefline}, we have $\det E$ is Hermitian flat. This implies

\begin{align}
\label{c12} c_1(E)^2\cdot [\omega^{n-2}]=c_1(\det E)^2\cdot [\omega^{n-2}]=0.
\end{align}

Let $h_1$ and $h_2$ be two Hermitian metrics on $\mathcal{O}_E(1)$. It is easy to check that
 \begin{equation}
 \Theta(\mathcal{O}_E(1),h_1)-\Theta(\mathcal{O}_E(1),h_2)=\bar{\partial}\partial \log h_2^{-1}h_1.
 \end{equation}

 Since $\partial\bar{\partial}\omega^{n-1}=\partial\bar{\partial}\omega^{n-2}=0$ and $h^{-1}_{2}h_1$ is a well-defined smooth function,
 \begin{equation}
 \begin{split}
 &\int_{PE} \Theta(\mathcal{O}_E(1),h_1)^r\wedge \pi^*{\omega^{n-1}}-\int_{PE} \Theta(\mathcal{O}_E(1),h_2)^r\wedge \pi^*{\omega^{n-1}}\\
 =&\int_{PE}\sum_{i=1}^r \left(\begin{array}{c}r \\ i \end{array}\right)  (\bar{\partial}\partial \log h_2^{-1}h_1)^i\wedge \Theta(\mathcal{O}_E(1),h_2)^{r-i} \wedge \pi^*{\omega^{n-1}}\\
 =&0
 \end{split}
 \end{equation}
 and
 \begin{equation}
 \begin{split}
 &\int_{PE} \Theta(\mathcal{O}_E(1),h_1)^{r+1}\wedge \pi^*{\omega^{n-2}}-\int_{PE} \Theta(\mathcal{O}_E(1),h_2)^{r+1}\wedge \pi^*{\omega^{n-2}}\\
 =& \int_{PE}\sum_{i=1}^{r+1} \left(\begin{array}{c}r+1 \\ i \end{array}\right)  (\bar{\partial}\partial \log h_2^{-1}h_1)^i\wedge \Theta(\mathcal{O}_E(1),h_2)^{r+1-i} \wedge \pi^*{\omega^{n-2}}\\
 =&0.
 \end{split}
 \end{equation}

 This implies that $\int_{PE} \Theta(\mathcal{O}_E(1),h)^r \wedge \pi^*{\omega^{n-1}}$ and $\int_{PE} \Theta(\mathcal{O}_E(1),h)^{r+1}\wedge \pi^*{\omega^{n-2}}$ are independent of  the choice of Hermitian metrics on $\mathcal{O}_{E}(1)$.

 Endow $PE$ with a Hermitian metric $\omega_{PE}$. Since $(E,\bar{\partial}_E)$ is nflat, $\mathcal{O}_E(1)$ is nef. This means for every $\epsilon>0$, there exists a Hermitian metric $h_{\epsilon}$ on $\mathcal{O}_E(1)$, such that
\begin{equation}
\sqrt{-1}\Theta(\mathcal{O}_E(1),h_{\epsilon})\geq -\epsilon \omega_{PE}.
\end{equation}
So
\begin{equation*}
\begin{split}
0&\leq \int_{PE}\left(\sqrt{-1}\Theta(\mathcal{O}_E(1),h_{\epsilon})+\epsilon\omega_{PE}\right)^{r+1}\wedge \pi^*{\omega^{n-2}}\\
 &=\int_{PE}(\sqrt{-1}\Theta(\mathcal{O}_E(1), h_{\epsilon}))^{r+1}\wedge \pi^*{\omega^{n-2}}\\
 &+\sum_{i=1}^r \left(\begin{array}{c}r+1 \\ i \end{array}\right) (\sqrt{-1}\Theta( \mathcal{O}_E(1), h_{\epsilon}))^{r+1-i}\wedge(\epsilon \omega_{PE})^i\wedge \pi^*\omega^{n-2}\\
 &\rightarrow \int_{PE} (\sqrt{-1}\Theta(\mathcal{O}_E(1),h))^{r+1}\wedge \pi^*{\omega^{n-2}}, \ \ \ \ \mbox{as} \ \ \epsilon\rightarrow 0.
\end{split}
\end{equation*}
 So by Lemma \ref{lmm5},
 \begin{equation}\label{equ10}
s_2(E)\cdot [\omega^{n-2}]=\int_{PE}\Xi^{r+1}\wedge \pi^*\omega^{n-2}\geq 0,
\end{equation}
where $\Xi=\frac{\sqrt{-1}}{2\pi}\Theta(\mathcal{O}_E(1),h)$ and $h$ is an arbitrary metric on $\mathcal{O}_E(1).$
 From (\ref{c12}), (\ref{equ10}) and (\ref{segre2}),  we have
 \begin{equation}\label{equ13}
 c_2(E)\cdot [\omega^{n-2}]=c_1(E)^2\cdot[\omega^{n-2}]-s_2(E)\cdot[\omega^{n-2}]\leq 0.
 \end{equation}
On the other hand, since $E$ is semi-stable, by Bogomolov inequality (Proposition \ref{Bogomolov}), we have
 \begin{equation}\label{equ14}
 c_2(E)\cdot[\omega^{n-2}]\geq \frac{r-1}{2r}c_1(E)^2\cdot[\omega^{n-1}]= 0.\end{equation}
Combining  (\ref{equ13}) and (\ref{equ14}), we have
 \begin{equation}c_2(E)\cdot[\omega^{n-2}]=0\end{equation}
and consequently $ch_2(E,H)\cdot [\omega^{n-2}]=\frac{1}{2}(c_1^2(E)-2c_2(E))\cdot [\omega^{n-2}]=0.$

 \medskip

 $(3)\Rightarrow (4)$ is just Theorem \ref{thm1}.

 \medskip

  $(4)\Rightarrow (5)$.  This can be proved by the result of the existence of Harder-Narasimhan filtration on non-K\"ahler manifolds (\cite{Bru}) and the argument of Step $2$ and Step $3$ in the proof of Theorem 1.1 in \cite{NZ1}. Here we omit the proof.

 \medskip

At last,  we prove  $(5)\Rightarrow (1)$. It is obvious that Hermitian flat vector bundles are nflat. And by Proposition \ref{prop2}, we get that $(5)$ implies $(1)$.
\qed

\bibliographystyle{plain}

\end{document}